\newcommand{\diffns}{\mathrm{d}}
\newcommand{\ubar}[1]{\underaccent{\bar}{#1}}
\begin{document}

\title{Pathwise uniqueness of non-uniformly elliptic SDEs with rough coefficients}\thanks{We thank an anonymous referee for helpful suggestions.}
\tnotetext[t]{}

\author[]{Olivier Menoukeu-Pamen}
\author[]{Youssef Ouknine}
\author[]{Ludovic Tangpi}

\abstract{
In this paper we review and improve pathwise uniqueness results for some types of one-dimensional stochastic differential equations (SDE) involving the local time of the unknown process.
The diffusion coefficient of the SDEs we consider is allowed to vanish on a set of positive measure and is not assumed to be smooth.
As opposed to various existing results, our arguments are mainly based on the comparison theorem for local time and the occupation time formula.
We apply our pathwise uniqueness results to derive strong existence and other properties of solutions for SDEs with rough coefficients.
}

\date{\today}
\keyAMSClassification{60H10, 60H60, 60J55}
\keyWords{Stochastic differential equations, pathwise uniqueness, comparison theorem for local times, local time of the unknown.}
\maketitleludo
\setcounter{page}{1} %

\section{Introduction}
Let $T\in (0,\infty)$ be a fixed deterministic time horizon and $(\Omega, {\cal F}, P)$ a given probability space equipped with the completed filtration $({\cal F}_t)_{t\in [0,T]}$ of a $d$-dimensional Brownian motion $W$.
We denote by $L^a(X)$ the local time at level $a\in \mathbb{R}$ of the semimartingale $X$.
Given a signed Radon measure $\nu$ on (the Borel subsets of) $\mathbb{R}$ and a progressively measurable function $\sigma:[0,T]\times\Omega\times\mathbb{R}\to \mathbb{R}^d$, we are interested in studying pathwise uniqueness for solutions of the one-dimensional stochastic differential equation
\begin{equation}\label{eqmainlt}
	X_t = x + \int_0^t\sigma_u(X_u)\,\diffns W_u + \int_{\mathbb{R}}L_t^a(X)\,\nu(\diffns  a).
\end{equation}
Such equations first appeared in the work of \citet{LeGall83} and was subsequently developed e.g. by \citet{Engl-Schm123} and \citet{Sto-Ypr}. 
One strong interest in this type of equations involving the local time of the unknown is due to its link to the so-called skew Brownian motion introduced and studied by \citet{Harr-She} and \citet{Blei-Engl}.
Pathwise uniqueness results for the SDE \eqref{eqmainlt} was obtained by \citet{Ouknine88} when $\sigma$ is of bounded variation.
In the case when $\nu$ is $\sigma$-finite and the diffusion coefficient is time-homogeneous, \citet{Blei-Engl} derived necessary and sufficient conditions for existence and uniqueness in law of a solution.
More recently, \citet{Ben-Bou-Ouk13} derived pathwise uniqueness results using the balayage formula.
Recall that the relevance of pathwise uniqueness of SDEs is stressed by the celebrated result of \citet{Yam-Wata} which allows, from pathwise uniqueness and weak existence, to derive strong existence.

If the measure $\nu$ is absolutely continuous w.r.t. the Lebesgue measure, a direct application of the occupation time formula shows that the SDE \eqref{eqmainlt} can be rewritten as
\begin{equation}
\label{eq:main_no.lt}
	X_t = x + \int_0^t\sigma_u(X_u)\diffns W_u + \int_0^tf(X_u)\sigma_u^2(X_u)\diffns  u,
\end{equation}
where the measurable function $f:\mathbb{R}\to \mathbb{R}$ is the density of $\nu$.
In the one dimensional case, when the drift is bounded and Borel measurable, \citet{Zvonkin} derives existence and uniqueness of strong solution. This result was generalised to multidimensional case by \citet{Veret81}. Since then, there has been a strong research effort to derive \emph{existence, uniqueness} and \emph{regularity} properties for \emph{strong solutions} of SDEs with non-smooth coefficients; see for example \citet{Engl-Schm123,Eng-Sch85}, \citet{Kry-Roeck05}, \citet{Menoukeu-13} and the references therein.
A prevalent assumption in the literature to derive pathwise uniqueness (and strong existence) is a uniform ellipticity condition on the diffusion coefficient $\sigma$, that is, $\frac{1}{2}tr(\sigma_t\sigma_t')(x)\ge c$ for some $c>0$ for all $(t,x)$.

The main objective of the present work is to study properties of solutions of the SDE \eqref{eqmainlt} without any a priory assumption on uniform ellipticity of the diffusion coefficient which in turn will be assumed to be merely measurable.
In this setting, the question of pathwise uniqueness of \eqref{eqmainlt} was studied by \citet{Engl-Schm123} and more recently, \citet{Cham-Jabin17} derived strong existence and pathwise uniqueness results for classical SDEs (i.e. without local time) when $\sigma$ is allowed to vanish.
Making ample use of the theory of local time, --more precisely of the comparison theorem for local times of \citet{Ouknine90} and \citet{Ben-Bou-Ouk13}-- and using more simple arguments, we improve existing results on pathwise uniqueness of \eqref{eqmainlt}, giving simplified arguments. In particular, we show that the so-called condition (LT) of \citet{Bar-Per84} guarantees pathwise uniqueness of SDEs with local time even in a more general setting than that of \eqref{eqmainlt}; see Theorem \ref{thm:lt_local_nonhom} and Proposition \ref{pro:lt}.
Assuming that the diffusion is deterministic and time-homogeneous i.e., $\sigma_t(\omega,x)=\sigma(x)$,  we derive the well-known uniqueness result of \citet{Engl-Schm123} using the comparison theorem for local times and the occupation time formula, see Theorem \ref{thm:pathwise.unique}.
For illustration purpose, we show how comparison can also be used to derive uniqueness for reflected SDEs.

Using the result of \citet{Yam-Wata} along with a transformation that eliminates the drift, we derive, as applications of our uniqueness results, existence of strong solutions of \eqref{eqmainlt}.
We further study some properties of the solution, including continuity and PDE-representation.
In particular, we show the rather striking fact that even if the coefficients of the SDEs are not smooth, the solution is still a continuous function of time and of the initial condition.
This is in line with the results of \citet{Mohammed15}, \citet{Menoukeu-17} and \citet{Fed-Flan} obtained under uniform ellipticity and of \citet{Bah-Mer-Ouk98} in the case for classical SDEs.

The rest of this work is organized as follows:
In the next section, we study pathwise uniqueness of SDEs with local time of the unknown, first considering the time-inhomogeneous case and then the time-homogeneous case, and in the final section we apply our pathwise uniqueness results to derive properties of SDEs with local time as well as classical SDEs not involving the local time of the unknown.

\section{Pathwise uniqueness}
\label{sec:uniqueness}

\subsection{The time-inhomogeneous case}
In this section, we study SDEs of the form
\begin{equation}
\label{eq:SDE_local_nonhom}
	X_t = x + \int_0^t\sigma_u(X_u) \diffns  W_u + \int_0^t\int_{\mathbb{R}}\diffns L^a_u(X)\nu_u(\diffns a),
\end{equation}
where $\sigma:[0,T]\times \Omega\times \mathbb{R}\to \mathbb{R}^d$ is a progressively measurable function, $(\nu_t)_{t\in [0,T]}$ is a flow of Borel measures on $\mathbb{R}$, $W$ is a $d$-dimensional Brownian motion and $L^a(X)$ the local time at level $a\in \mathbb{R}$ of the process $X$.
The SDE \eqref{eq:SDE_local_nonhom} was studied in \cite{Weinryb} when $\sigma$ is constant and $\nu_t$ of the form $\alpha_t\delta_0(da)$ where $\delta_0$ is the Dirac mass at zero; see also Subsection \ref{subsec:exist}.
It is well known that uniqueness in law is weaker than pathwise uniqueness.
Our first result gives a condition under which the converse holds true for the SDE \eqref{eq:SDE_local_nonhom}.
The following condition was introduced in \cite{Bar-Per84} and further considered in \cite{Rut90}:
\begin{definition}[Condition (LT)]
	The function $\sigma$ satisfies the condition (LT) if $L^0_t(X^1 - X^2)=0$ for all $t \in [0,T]$ and for every processes $X^1, X^2$ such that
	\begin{equation}
	\label{eq:lt}
		X^i_t = X^i_0 + \int_0^t\sigma_u(X^i_u) \diffns  W_u + V^i_t, \quad i=1,2,
	\end{equation}
	where $(V^i_t)_{t \in [0,T]}$ are continuous adapted processes with bounded variation.
\end{definition}
\begin{theorem}
\label{thm:lt_local_nonhom}
Suppose $\sigma$ satisfies (LT), and the SDE \eqref{eq:SDE_local_nonhom} satisfies uniqueness in law. Then it satisfies pathwise uniqueness.
\end{theorem}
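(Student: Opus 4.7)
The plan is to build a new solution from two given ones and then invoke uniqueness in law. Let $X^1, X^2$ be two solutions of \eqref{eq:SDE_local_nonhom} on the same filtered space driven by the same Brownian motion $W$ and starting from $x$. Each $X^i$ has the form $X^i_t = x + \int_0^t \sigma_u(X^i_u)\diffns W_u + V^i_t$ with $V^i_t := \int_0^t\int_{\mathbb{R}}\diffns L^a_u(X^i)\nu_u(\diffns a)$ continuous, adapted, and of bounded variation (via $\nu = \nu^+ - \nu^-$). Hence condition (LT) applies and yields $L^0_t(X^1 - X^2) = 0$ for every $t \in [0,T]$.

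I would then show that $M_t := X^1_t \vee X^2_t$ is itself a solution of \eqref{eq:SDE_local_nonhom} starting from $x$. Writing $M = X^2 + (X^1 - X^2)^+$ and applying Tanaka's formula to $(X^1 - X^2)^+$, the vanishing of the local time at $0$ produces
\begin{equation*}
\diffns M_t = 1_{\{X^1_t > X^2_t\}}\diffns X^1_t + 1_{\{X^1_t \leq X^2_t\}}\diffns X^2_t.
\end{equation*}
Since $M_u = X^i_u$ precisely on the set where the corresponding indicator is nonzero, the martingale part collapses to $\int_0^\cdot \sigma_u(M_u)\diffns W_u$. The finite variation part equals $\int_0^t 1_{\{X^1_u > X^2_u\}}\diffns V^1_u + \int_0^t 1_{\{X^1_u \leq X^2_u\}}\diffns V^2_u$, which I would identify with $\int_0^t\int_{\mathbb{R}}\diffns L^a_u(M)\nu_u(\diffns a)$ via the local-time decomposition
\begin{equation*}
L^a_t(M) = \int_0^t 1_{\{X^1_u > X^2_u\}}\diffns L^a_u(X^1) + \int_0^t 1_{\{X^1_u \leq X^2_u\}}\diffns L^a_u(X^2),
\end{equation*}
obtained by applying Tanaka to $(M - a)^+$ and to the $(X^i - a)^+$ and then using $L^0(X^1 - X^2) = 0$ to control the boundary set $\{X^1 = X^2\}$.

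Having shown that $M$ solves \eqref{eq:SDE_local_nonhom}, uniqueness in law yields $M \stackrel{d}{=} X^1$. Combined with the pointwise bound $M \geq X^1$, this forces $M = X^1$ almost surely: apply any strictly increasing bounded function $\varphi$, take expectations (so that $E[\varphi(M_t) - \varphi(X^1_t)] = 0$ with a nonnegative integrand), and conclude by continuity of paths. By symmetry $M = X^2$ a.s., whence $X^1 = X^2$. I expect the main technical obstacle to be the rigorous derivation of the local-time identity for $M$, in particular the bookkeeping on $\{X^1 = X^2\}$ where the hypothesis (LT) is essential to prevent spurious contributions from either term.
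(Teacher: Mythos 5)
Your argument is correct and takes essentially the same route as the paper: condition (LT) forces $L^0(X^1-X^2)=0$, Tanaka's formula together with the Ouknine identity for the local time of the maximum shows that $X^1\vee X^2$ is again a solution, and uniqueness in law concludes. The only divergence is the endgame --- the paper also builds $X^1\wedge X^2$ as a solution and computes $E[|X^1_t-X^2_t|]=E[X^1_t\vee X^2_t]-E[X^1_t\wedge X^2_t]=0$, whereas you compare $X^1\vee X^2$ with $X^1$ directly via a bounded strictly increasing test function, which is slightly more economical (no need for the minimum) and sidesteps the integrability of the solutions that the paper's subtraction of expectations implicitly requires.
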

\begin{proof}
	Using the condition (LT), we can show that if $X^1$ and $X^2$ are solutions of \eqref{eq:SDE_local_nonhom} so are $X^1\wedge X^2$ and $X^1\vee X^2$.
	In fact, since $\int_0^t\int_{\mathbb{R}}\diffns L^a_u(X)\nu_u(\diffns a)$ is continuous, adapted and with bounded variations, Tanaka's formula yields
	\begin{align*}
		X^1_t\vee X_t^2 &= X_t^2 + (X_t^1 - X_t^2)^+
		            = X_t^2 + \int_0^t1_{\{X^1_u > X^2_u\}}\diffns  (X^1_u - X^2_u) + \frac{1}{2}L^0_t(X^1 - X^2)\\
		            &= X_0 + \int_0^t1_{\{X^1_u > X^2_u\}}\diffns X_u^1 + \int_0^t1_{\{X^1_u\le X^2_u\}}\diffns X^2_u\\
		            &= X_0 + \int_0^t\sigma_u(X^1_u\vee X_u^2 )\diffns W_u + \int_0^t\int_\mathbb{R}\nu_u(\diffns a)\left(1_{\{X^1_u>X^2_u\}}\diffns L^a_u(X^1) + 1_{\{X^1_u\le X^2_u\}}\diffns L^a_u(X^2)\right).
	\end{align*}
	By (a trivial adaptation of) the result of \cite{Ouknine90} on the local time of the maximum, one has
	\begin{equation*}
		L^a_t(X^1\vee X^2) = \int_0^t1_{\{X^1_u> X_u^2\}}\diffns L_u^a(X^1) + \int_0^t1_{\{X^1_u\le X^2_u\}}\diffns L_u^a(X^2).
	\end{equation*}
	Hence,
	\begin{equation*}
		X^1_t\vee X^2_t = \int_0^t\sigma_u(X^1_u\vee X_u^2)\diffns W_u + \int_0^t\int_\mathbb{R}\nu_u(\diffns a)\diffns L^a_u(X^1\vee X^2).
	\end{equation*}
	Using the identity $L^a(X^1\wedge X^2) = L^a(X^1) + L^a(X^2) - L^a(X^1\vee X^2)$ (see e.g. \cite{Ouknine90}), the argument above also shows that $X^1\wedge X^2$ is a solution.
	Thus, by uniqueness in law, $X^1\wedge X^2$ and $X^1\vee X^2$ must have the same law.
	Therefore, for every $t \in [0,T]$,
	\begin{equation*}
		E[|X^1_t - X^2_t|] = E[X^1_t\vee X^2_t] - E[X^1_t\wedge X^2_t] = 0.
	\end{equation*}
	That is, $X^1$ and $X^2$ are indistinguishable since they are continuous processes.
\end{proof}
The condition (LT) is standard in the study of time inhomogeneous SDEs, see e.g. \cite{Ben-Bou-Ouk13} and \cite{Bar-Per84}.
We present below an example of functions satisfying (LT).
\begin{example}
	Let $d=1$ and $\sigma:[0,T]\times \Omega\times \mathbb{R} \to \mathbb{R}$ be  such that there is $\varepsilon>0$: $\sigma \ge \varepsilon$ and there are two functions $\alpha^1, \alpha^2:[0,T]\times \Omega\times \mathbb{R} \to \mathbb{R}$ that are increasing in $x$ for all $t$ and of uniformly bounded variation in $t$ on every compact of $\mathbb{R}$, and such that $1/\sigma = \alpha^1 - \alpha^2$.
	Pathwise uniqueness for SDEs with diffusion coefficients satisfying these conditions were first studied in \cite{Nakao}.
In fact, set $Y^i_t:= F(t, X^i_t)$, with $F(t,x):= \int_0^x\frac{\diffns u}{\sigma_t(u)}$, where $X^i$ are processes satisfying \eqref{eq:lt}.
	It follows by \cite[Theorem 1]{Ouknine_fonc_89} that for each $i=1,2$, there is a continuous bounded variation process $ V^i$ such that $Y^i_t = B_t + V^i_t$.
	Thus, $Y^1_t - Y^2_t = V^1_t - V^2_t$.
	Since the right hand side of the latter equality is a continuous process with bounded variations, it holds $L^0_t(Y^1 - Y^2) =0$.
	Thus, by \cite{Ouknine88} one has $L^0_t(X^1 - X^2)=0$.
\end{example}
Next, we assume that the flow of measures $(\nu_t)_{t\in [0,T]}$ is constant, i.e. for all $t$, one has $\nu_t \equiv \nu$. Then the SDE \eqref{eq:SDE_local_nonhom} becomes
\begin{equation}
\label{eq:SDE.inhom}
	X_t = x + \int_0^t\sigma_u(X_u)\diffns W_u + \int_{\mathbb{R}}L_t^a(X)\,\nu(\diffns a).
\end{equation}
In this case, the requirement on the uniqueness in law in Theorem \ref{thm:lt_local_nonhom} can be dropped.
The SDE \eqref{eq:SDE.inhom} has been considered in \cite{LeGall84} and subsequently in \cite{Engl-Schm123} and \cite{Blei-Engl}, under the conditions
\begin{itemize}
	\item[(A1)] $|\nu|(\{a\})<1$ for all $a \in \mathbb{R}$,
	\item[(A2)] $|\nu|(\mathbb{R})<\infty$.
\end{itemize}
We show in Proposition \ref{pro:lt} below that the above conditions can be weakened.
Consider the functions
	\begin{equation*}
		f_\nu(x):= \exp(-2\nu^c((-\infty,x]))\Pi_{y\le x}\left(\frac{1-\nu\{y\}}{1+\nu\{y\}} \right)\quad \text{and} \quad F_\nu(x):= \int_0^x f_\nu(z)\diffns z,
	\end{equation*}
where $\nu^c$ denotes the continuous part of the measure $\nu$.
	Recall that due to conditions (A1) and (A2), the function $f_\nu$ is well-defined, increasing, right-continuous and $0<\ubar{m}\le f_\nu\le \bar{m}$ for some $\ubar{m},\bar{m}\in \mathbb{R}$.
	Furthermore, it can be checked that $F_\nu$ is invertible, and $F_\nu$ and $F_\nu^{-1}$ are Lipschitz continuous functions; see e.g. \cite{LeGall84} for details.
	Denote by $N_\sigma$ the set of zeros of the function $\sigma$ defined as follows
	$N_\sigma:= \{x \in \mathbb{R}: \sigma_t(x)=0\,\,\,  \diffns t\text{-a.s.}\}$.
\begin{proposition}
\label{pro:lt}
	Suppose $|\nu|(\{a\})<1$ for all $a \in N_\sigma$ and $|\nu|(N_\sigma^c)<\infty$. In addition, suppose $\sigma$ satisfies condition (LT). Then the SDE \eqref{eq:SDE.inhom} satisfies the pathwise uniqueness property.
\end{proposition}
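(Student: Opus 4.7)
The plan is to reduce \eqref{eq:SDE.inhom} to a driftless stochastic integral equation via a Le~Gall-type bi-Lipschitz transformation $F_\nu$, invoke condition (LT) on the transformed coefficient, and close the argument by a direct local-martingale computation, thereby bypassing the uniqueness-in-law assumption of Theorem~\ref{thm:lt_local_nonhom}. I would first verify that $F_\nu$, defined as the primitive of $f_\nu$, remains strictly increasing and bi-Lipschitz under the weakened hypotheses: since $|\nu|(N_\sigma^c)<\infty$ and $|\nu|(\{a\})<1$ for $a\in N_\sigma$, the two-sided bounds $0<\ubar{m}\le f_\nu\le \bar{m}<\infty$ should still hold. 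The occupation-times formula, applied to any solution $X$ of \eqref{eq:SDE.inhom}, yields $\int_{N_\sigma} L_t^a(X)\,da=0$, so $L_t^a(X)=0$ for Lebesgue-a.e.\ $a\in N_\sigma$. Consequently, the Lebesgue-absolutely-continuous part of $\nu|_{N_\sigma}$, which may carry infinite mass, is effectively invisible in $\int_\mathbb{R} L_t^a(X)\,\nu(\diffns a)$, while the atomic and singular contributions on $N_\sigma$ are controlled by the atom bound.

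Next, applying the generalized It\^o--Tanaka formula to $F_\nu(X^i)$ for two solutions $X^1, X^2$ and using the defining identity $F_\nu''(\diffns a) + 2 f_\nu(a)\,\nu(\diffns a)=0$ in the distributional sense (handled atom-by-atom on $N_\sigma$), the contribution of the local-time drift in \eqref{eq:SDE.inhom} cancels against the second-derivative term of It\^o--Tanaka, leaving
\[
Y^i_t := F_\nu(X^i_t) = F_\nu(x) + \int_0^t f_\nu(X^i_u)\,\sigma_u(X^i_u)\,\diffns W_u.
\]
Writing $\tilde\sigma_u(y) := f_\nu(F_\nu^{-1}(y))\,\sigma_u(F_\nu^{-1}(y))$, the process $Y^i$ solves the driftless SDE $Y^i = F_\nu(x) + \int_0^\cdot \tilde\sigma_u(Y^i_u)\,\diffns W_u$. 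The coefficient $\tilde\sigma$ should inherit condition (LT) from $\sigma$: any pair $\tilde X^1, \tilde X^2$ satisfying \eqref{eq:lt} with $\tilde\sigma$ transports, via the bi-Lipschitz map $F_\nu^{-1}$ and a further It\^o--Tanaka expansion, to a pair satisfying \eqref{eq:lt} with $\sigma$ and continuous bounded variation drifts, and bi-Lipschitzness then gives the equivalence $L_t^0(\tilde X^1-\tilde X^2)=0 \Leftrightarrow L_t^0(F_\nu^{-1}(\tilde X^1)-F_\nu^{-1}(\tilde X^2))=0$. Taking $V^i\equiv 0$ in the definition of (LT) for $\tilde\sigma$ thus gives $L^0_t(Y^1-Y^2)=0$.

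Finally, Tanaka's formula applied to $Y^1-Y^2$, combined with $L_t^0(Y^1-Y^2)=0$, yields
\[
|Y^1_t-Y^2_t| = \int_0^t \mathrm{sgn}(Y^1_u-Y^2_u)\,\bigl(\tilde\sigma_u(Y^1_u)-\tilde\sigma_u(Y^2_u)\bigr)\,\diffns W_u,
\]
a continuous nonnegative local martingale starting at $0$; as a nonnegative supermartingale starting at $0$ it must vanish identically, so $Y^1=Y^2$ and hence $X^1=X^2$ by the bi-Lipschitz invertibility of $F_\nu$. The main technical hurdle will be to justify the It\^o--Tanaka cancellation rigorously when $|\nu|(N_\sigma)$ is infinite: one needs to argue carefully, using the occupation-times identity and the atom bound on $N_\sigma$, that $F_\nu$ is nonetheless a legitimate difference of two convex functions whose distributional second derivative exactly offsets the local-time drift of \eqref{eq:SDE.inhom}, including atom-by-atom cancellation at atoms of $\nu$ lying in $N_\sigma$.
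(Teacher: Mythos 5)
Your overall skeleton (a Le~Gall transform $F$, condition (LT) to kill the local time at $0$ of the difference of the transformed solutions, then a nonnegative local martingale argument) is essentially the content of Lemma \ref{lem:indist}, and that part of the plan is sound. The gap lies in how you neutralize the part of $\nu$ living on $N_\sigma$, and it is a genuine one, in two respects. First, you build the transform from the \emph{full} measure $\nu$ and assert that the two-sided bounds $0<\ubar{m}\le f_\nu\le\bar{m}<\infty$ ``should still hold.'' They need not: under the weakened hypotheses $|\nu|$ may have infinite total mass on $N_\sigma$ (for instance infinitely many atoms of mass $1/2$ located in $N_\sigma$, which is a Radon measure satisfying both stated conditions), in which case the infinite product defining $f_\nu$ collapses to $0$ on a half-line and $F_\nu$ is not even injective, let alone bi-Lipschitz. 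The transform must therefore be built from the restriction $\tilde\nu(\cdot):=\nu(\cdot\cap N_\sigma^c)$, which does have finite total mass --- and to replace $\nu$ by $\tilde\nu$ in the equation one must first show that $\nu|_{N_\sigma}$ contributes nothing to $\int_{\mathbb{R}}L^a_t(X)\,\nu(\diffns a)$.

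Second, your mechanism for that --- the occupation time formula --- only yields $L^a_t(X)=0$ for Lebesgue-a.e.\ $a\in N_\sigma$. This disposes of the absolutely continuous part of $\nu|_{N_\sigma}$ but not of its atoms or of a singular continuous part carried by a Lebesgue-null subset of $N_\sigma$; the bound $|\nu|(\{a\})<1$ controls individual atoms, not their aggregate contribution, and says nothing about singular continuous mass. The idea you are missing, and the one the paper's proof turns on, is that condition (LT) itself delivers the pointwise statement: for $a\in N_\sigma$ the constant process $\equiv a$ is of the form \eqref{eq:lt} with $V\equiv 0$ (since $\sigma_\cdot(a)=0$ $\diffns t$-a.s.\ and constants have vanishing local times), so applying (LT) to the pair $(X,a)$ gives $L^a_t(X)=L^0_t(X-a)=0$ for \emph{every} $a\in N_\sigma$. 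With that in hand every solution solves the equation driven by $\tilde\nu$, and Lemma \ref{lem:indist} finishes the proof: one applies (LT) once more to $X^1,X^2$ themselves (whose drift terms are continuous adapted processes of bounded variation) and transfers $L^0(X^1-X^2)=0$ to $L^0(F_{\tilde\nu}(X^1)-F_{\tilde\nu}(X^2))=0$ via the comparison theorem for local times, rather than proving that (LT) is inherited by the transformed coefficient as you propose. Your closing supermartingale argument is fine, but the ``atom-by-atom cancellation on $N_\sigma$'' you defer to the end cannot be carried out along the lines you describe.
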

The proof of Proposition \ref{pro:lt} uses the following lemma that gives conditions under which two continuous processes are indistinguishable:
\begin{lemma}
\label{lem:indist}
	Let $\nu$ be a measure satisfying conditions \textup{(}A1\textup{)} and \textup{(}A2\textup{)} and
	let $X^1$ and $X^2$ be two semimartingales of the form
	\begin{equation*}
		X^i = x + M_t^i + \int_\mathbb{R}L^a_t(X^i)\nu(\diffns a), \quad i=1,2,
	\end{equation*}
	where $M^i$ are continuous local martingales.
	If $L^0(X^1 - X^2)=0$, then $X^1$ and $X^2$ are indistinguishable.
\end{lemma}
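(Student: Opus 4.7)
The plan is to carry out the Le Gall scale-function transformation $Y^i := F_\nu(X^i)$ and then to exploit the elementary fact that a nonnegative continuous local martingale starting at the origin is indistinguishable from zero.

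First, I would apply the Meyer--Tanaka formula to the Lipschitz convex function $F_\nu$. The key input is the identity
\begin{equation*}
df_\nu(a) = -\bigl(f_\nu(a)+f_\nu(a-)\bigr)\,\nu(\diffns a),
\end{equation*}
which follows directly from the product formula defining $f_\nu$. Combining this with the support property of $dL^a(X^i)$ (concentrated on $\{X^i=a\}$), the integral of $f_\nu(X^i_{u-})$ against the local-time drift $\diffns\!\int L^a_u(X^i)\,\nu(\diffns a)$ cancels the singular term $\tfrac{1}{2}\int_{\mathbb{R}}L^a_t(X^i)\,df_\nu(a)$, leaving
\begin{equation*}
Y^i_t = F_\nu(x) + \int_0^t f_\nu(X^i_{u-})\,\diffns M^i_u,
\end{equation*}
a continuous local martingale.

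Next, I would transfer the hypothesis to $L^0(Y^1-Y^2)=0$. Since $F_\nu$ is a bi-Lipschitz bijection of $\mathbb{R}$ with $\ubar{m}\le f_\nu\le\bar{m}$, one has $\mathrm{sgn}(Y^1-Y^2)=\mathrm{sgn}(X^1-X^2)$ and $\ubar{m}\,|X^1-X^2|\le|Y^1-Y^2|\le\bar{m}\,|X^1-X^2|$. Using the occupation-time characterisation
\begin{equation*}
L^0_t(Z)=\lim_{\varepsilon\downarrow 0}\frac{1}{2\varepsilon}\int_0^t 1_{\{|Z_u|<\varepsilon\}}\,\diffns\langle Z\rangle_u,
\end{equation*}
together with the equality $d\langle Y^1-Y^2\rangle_u = f_\nu(X^1_{u-})^2\,d\langle X^1-X^2\rangle_u$ valid on the coincidence set $\{X^1_u=X^2_u\}$, one deduces that $L^0(Y^1-Y^2)=0$. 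Tanaka's formula applied to $|Y^1-Y^2|$ then exhibits it as a nonnegative continuous local martingale null at the origin, which is indistinguishable from $0$ by localisation and Fatou. Injectivity of $F_\nu$ finally gives $X^1=X^2$.

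The main obstacle lies in the second step, the local-time transfer. The bi-Lipschitz property of $F_\nu$ provides equivalent sublevel sets, but the quadratic variations $d\langle Y^1-Y^2\rangle$ and $d\langle X^1-X^2\rangle$ are not globally proportional because $f_\nu$ is discontinuous at the atoms of $\nu$. Controlling the contribution of these exceptional levels to the local time at zero will require careful analysis that exploits that the atoms of $\nu$ form a countable set and that $f_\nu$ is continuous on its complement; outside this negligible set the pointwise factor $f_\nu(X^1_{u-})^2\le \bar{m}^2$ allows the desired comparison to go through.
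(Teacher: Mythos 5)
Your overall strategy coincides with the paper's: transform by $Y^i=F_\nu(X^i)$, check via Tanaka and the identity $\diffns f_\nu(a)=-(f_\nu(a)+f_\nu(a-))\nu(\diffns a)$ that each $Y^i$ is a continuous local martingale, transfer the hypothesis to $L^0(Y^1-Y^2)=0$, conclude by Tanaka that $|Y^1-Y^2|$ is a nonnegative local martingale null at the origin, and invert $F_\nu$. The first, third and fourth steps are fine as you state them.

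The gap is exactly where you flag it, and your proposed repair does not close it. You want to deduce $L^0(Y^1-Y^2)=0$ from the occupation-density limit
$L^0_t(Z)=\lim_{\varepsilon\downarrow 0}\frac{1}{2\varepsilon}\int_0^t 1_{\{0\le Z_u<\varepsilon\}}\diffns\langle Z\rangle_u$
by comparing $\diffns\langle Y^1-Y^2\rangle$ with $\diffns\langle X^1-X^2\rangle$. But with $Y^i_t=F_\nu(x)+\int_0^t f_\nu(X^i_u)Z^i_u\diffns W_u$ one has
\begin{equation*}
	\diffns\langle Y^1-Y^2\rangle_u=\bigl(f_\nu(X^1_u)(Z^1_u-Z^2_u)+(f_\nu(X^1_u)-f_\nu(X^2_u))Z^2_u\bigr)^2\diffns u ,
\end{equation*}
and the cross term $(f_\nu(X^1_u)-f_\nu(X^2_u))Z^2_u$ involves the unbounded predictable integrand $Z^2$; the limit in the occupation-density formula is taken over the sets $\{|Y^1_u-Y^2_u|<\varepsilon\}$, on which $X^1_u\neq X^2_u$ in general, so the proportionality you invoke ``on the coincidence set'' is of no use there, and the countability of the atoms of $\nu$ does not control this term. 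The paper avoids quadratic variations altogether at this step: from the two-sided bound $\ubar{m}(X^1_t-X^2_t)^+\le (Y^1_t-Y^2_t)^+\le\bar{m}(X^1_t-X^2_t)^+$ it invokes the comparison theorem for local times of \cite{Ouknine88} (a generalisation of Nakao's lemma), which states precisely that such a pathwise domination of positive parts transfers the vanishing of the local time at zero from $X^1-X^2$ to $Y^1-Y^2$. Substituting that citation for your occupation-density argument makes your proof complete and essentially identical to the paper's.
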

\begin{proof}
	First recall that the function $F_\nu$ satisfies $\ubar{m}(x - y)^+\le (F_\nu(x) - F_\nu(y))^+\le \bar{m}(x - y)^+$ for all $x, y$; see e.g. \cite{Ben-Bou-Ouk13}.
	Set $Y^i_t: = F_\nu(X^i_t)$, $i=1,2$, $t \in [0,T]$.
	It follows from Tanaka's formula that
	\begin{equation*}
		Y^{i}_t = F_\nu(x) + \int_0^tf_\nu(X^i_u)Z^i_u\diffns W_u,
	\end{equation*}
	with $Z^i$ the predictable process such that $M^i_t = M^i_0 + \int_0^tZ^i_u\diffns W_u$.
	Thus, $F_\nu(X^i)$ is a local martingale for each $i$.
	Moreover, $\ubar{m}(X^1 - X^2)^+ \le (Y^1 - Y^2)^+\le \bar{m}(X^1 - X^2)^+$, so that since $L^0(X^1 - X^2)=0$, it follows from the comparison theorem for local times (see \cite{Ouknine88}) that $L^0_t(Y^1 - Y^2) =0$.
	Thus, an application of Tanaka's formula again shows that
	\begin{equation*}
		|Y^1_t - Y^2_t| = \int_0^t\textrm{sign}(Y^1_u - Y^2_u)\diffns  (Y^1_u - Y^2_u),
	\end{equation*}
	from which a simple localization argument shows that $E[|Y^1_t - Y^2_t|] =0$, i.e. $Y^1_t = Y^2_t$.
	Since $F_\nu$ is invertible, this implies $X^1_t = X^2_t$ for every $t \in [0,T]$.
	We therefore conclude that $X^1$ and $X^2$ are indistinguishable since they are continuous processes.
\end{proof}
We now turn to the proof of Proposition \ref{pro:lt}.
\begin{proof}[of Proposition \ref{pro:lt}]
First notice that 
\begin{equation}
\label{eq:lemLT}
	L^x_t(X)=0 \quad \text{for all $x \in N_\sigma$ and every solution $X$ of \eqref{eq:SDE.inhom}}.
\end{equation}
Indeed, let $x \in N_\sigma$.
	Since $L^a_t(x)=0$ for all $(t,\omega)\in [0,T]\times \Omega$ and $a \in \mathbb{R}$, the (constant) process $x$ solves the SDE \eqref{eq:SDE.inhom} with initial condition $X_0 = x$.
	Thus, for every solution $X$ of \eqref{eq:SDE.inhom}, it follows from condition (LT) that $L^x_t(X) = L^0_t(X-x)=0$.

	Let $\tilde{\nu}$ denote the restriction of the measure $\nu$ on $N_\sigma^c$, i.e.	$\tilde \nu (A):= \nu(A \cap N_\sigma^c)$ for all Borel subset $A$ of $\mathbb{R}$.
	The measure $\tilde \nu$ satisfies the conditions (A1) and (A2) and by \eqref{eq:lemLT}, every solution $X$ of the SDE \eqref{eq:SDE.inhom} satisfies
	\begin{equation*}
		X_t = x + \int_0^t\sigma_u(X_u)\diffns W_u + \int_{\mathbb{R}}L_t^a(X)\,\tilde{\nu}(\diffns a).
	\end{equation*}
	Thus, the result follows by Lemma \ref{lem:indist}.
\end{proof}

\subsection{The time homogeneous case with deterministic coefficient}
In this subsection, we study the SDE
\begin{equation}
\label{eq:SDE.homogene}
	X_t = x + \int_0^t\sigma(X_u)\,dW_u + \int_{\mathbb{R}}L^a(X)\nu(da),
\end{equation}
with $\sigma:\mathbb{R}\to \mathbb{R}^d$ a measurable function.
We show that in this case, the condition (LT) can essentially be replaced by integrability conditions on $\sigma$ to obtain pathwise uniqueness.

Consider the following conditions: There exist two functions $f:\mathbb{R}\to \mathbb{R}_+$ and $h:\mathbb{R}\to \mathbb{R}_+$ such that
\begin{itemize}
	\item[(A3)] $\int_{0^+}\frac{\diffns a}{h^2(a)}=+\infty$ and $f/\sigma \in L^2_{\text{loc}}(\mathbb{R})$,
	\item[(A4)] $|\sigma(x) - \sigma(y)| \le (f(x) + f(y))h(|x - y|)\quad \text{and} \quad N_\sigma \subseteq N_f:= \{x:f(x) = 0\}$.
\end{itemize}

\begin{theorem}
\label{thm:pathwise.unique}
	Assume that  the conditions \textup{(}A1\textup{)}-\textup{(}A4\textup{)} are satisfied.
	Then, the SDE \eqref{eq:SDE.homogene} has the pathwise uniqueness property.
\end{theorem}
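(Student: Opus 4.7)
The plan is to reduce to Proposition \ref{pro:lt} by verifying that conditions (A3)--(A4) force $\sigma$ to satisfy condition (LT); the hypotheses on $\nu$ required by that proposition follow at once from (A1)--(A2), since $|\nu|(\{a\})<1$ for every $a$ and $|\nu|(N_\sigma^c)\le|\nu|(\mathbb{R})<\infty$. So let $X^1, X^2$ be two processes of the form \eqref{eq:lt} and set $Y:=X^1-X^2$. I would show that $L^0_t(Y)=0$ by a Yamada--Watanabe-type argument built on the occupation time formula.

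The starting identity, obtained by applying the occupation time formula to $Y$ (whose bracket is $d\langle Y\rangle_u=(\sigma(X^1_u)-\sigma(X^2_u))^2\diffns u$) with integrand $g(a)=1_{\{0<a\le\epsilon\}}/h^2(a)$, reads
\begin{equation*}
\int_0^{\epsilon}\frac{L^a_t(Y)}{h^2(a)}\diffns a \;=\; \int_0^t\frac{1_{\{0<Y_u\le\epsilon\}}}{h^2(Y_u)}(\sigma(X^1_u)-\sigma(X^2_u))^2\diffns u.
\end{equation*}
By the growth inequality in (A4), the right-hand side is bounded, uniformly in $\epsilon$, by $\int_0^t(f(X^1_u)+f(X^2_u))^2\diffns u$. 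The crucial fact is that this quantity is a.s.\ finite: the inclusion $N_\sigma\subseteq N_f$ from (A4) gives $f(X^i_u)=0$ whenever $\sigma(X^i_u)=0$, so applying the occupation time formula to each $X^i$ (with the convention $0/0=0$) yields
\begin{equation*}
\int_0^t f^2(X^i_u)\diffns u \;=\; \int_0^t\frac{f^2(X^i_u)}{\sigma^2(X^i_u)}\,d\langle X^i\rangle_u \;=\; \int_\mathbb{R}\frac{f^2(a)}{\sigma^2(a)}\,L^a_t(X^i)\diffns a,
\end{equation*}
and continuity of $X^i$ makes $a\mapsto L^a_t(X^i)$ compactly supported, so finiteness follows from the assumption $f/\sigma\in L^2_{\text{loc}}(\mathbb{R})$ in (A3).

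To conclude, suppose the event $\{L^0_t(Y)>0\}$ has positive probability. By right-continuity of the map $a\mapsto L^a_t(Y)$ at $0$, on this event one finds (random) $\epsilon_0,c>0$ such that $L^a_t(Y)\ge c$ for $a\in[0,\epsilon_0]$; the first part of (A3) then forces $\int_0^{\epsilon_0}L^a_t(Y)/h^2(a)\diffns a=+\infty$, contradicting the finite upper bound derived above. Hence $L^0_t(X^1-X^2)=0$ a.s., condition (LT) holds, and Proposition \ref{pro:lt} delivers the pathwise uniqueness property. The main subtlety is precisely the handling of the degeneracy $\sigma=0$: without an ellipticity assumption, the occupation time formula can only be exploited to control $\int_0^t f^2(X^i_u)\diffns u$ because the hypothesis $N_\sigma\subseteq N_f$ suppresses the contribution of the zero set of $\sigma$ to the integral, a role absent in classical Yamada--Watanabe arguments.
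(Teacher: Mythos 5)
Your proof is correct and follows essentially the same route as the paper: the contradiction argument combining the occupation time formula, right-continuity of $a\mapsto L^a_t$, and $\int_{0^+}h^{-2}(a)\,\diffns a=+\infty$ is precisely the paper's Lemma \ref{lem:int.zero}, and the finiteness bound $\int_0^t(f(X^1_u)+f(X^2_u))^2\,\diffns u<\infty$ obtained via $N_\sigma\subseteq N_f$, the occupation time formula for each $X^i$, compact support of the local time, and $f/\sigma\in L^2_{\text{loc}}$ is exactly the paper's verification. The only (harmless) repackaging is that you establish the full condition (LT) for arbitrary processes of the form \eqref{eq:lt} and then invoke Proposition \ref{pro:lt}, whereas the paper applies the same estimates only to two solutions of the SDE and concludes directly with Lemma \ref{lem:indist}.
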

The proof of Theorem \ref{thm:pathwise.unique} uses the following lemma:

\begin{lemma}
\label{lem:int.zero}
	Let $X^1$ and $X^2$ be two solutions of \eqref{eq:SDE.homogene}.
	Suppose $\int_0^\cdot\frac{\diffns \langle X^1_u - X^2_u\rangle}{h^2(X^1_u - X^2_u)}<\infty$. Then $L^0(X^1-X^2)=0$.
\end{lemma}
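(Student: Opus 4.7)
Set $Y := X^1 - X^2$, which is a continuous semimartingale. The strategy is the classical Yamada--Watanabe-type argument rephrased through the occupation time formula: the hypothesis will be recast as finiteness of $\int_0^\infty L^a_t(Y)/h^2(a)\diffns a$, and this will be incompatible with $L^0_t(Y)>0$ thanks to the divergence condition in (A3).

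First, I would apply the occupation time formula to $Y$ with the Borel weight $\varphi(a) := h^{-2}(a)\,1_{(0,\infty)}(a)$, obtaining for every $t\in[0,T]$
\begin{equation*}
	\int_0^\infty \frac{L^a_t(Y)}{h^2(a)}\diffns a \;=\; \int_0^t \frac{1_{\{Y_u>0\}}}{h^2(Y_u)}\diffns \langle Y\rangle_u \;\le\; \int_0^t \frac{\diffns \langle Y\rangle_u}{h^2(Y_u)} \;<\; \infty \quad P\text{-a.s.},
\end{equation*}
the last inequality being the hypothesis. Inserting the indicator $1_{\{Y_u>0\}}$ (equivalently, restricting the $a$-integration to $(0,\infty)$) is what allows me to exploit (A3), which only constrains $h$ on the positive half-line.

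Next I would argue by contradiction: suppose $L^0_t(Y)>0$ on some event of positive probability. Since $a\mapsto L^a_t(Y)$ is right-continuous at $0$ (a standard property of the right local time of a continuous semimartingale), on that event there exists a random $\delta>0$ with $L^a_t(Y)\ge L^0_t(Y)/2$ for all $a\in[0,\delta)$. Consequently,
\begin{equation*}
	\int_0^\infty \frac{L^a_t(Y)}{h^2(a)}\diffns a \;\ge\; \frac{L^0_t(Y)}{2}\int_0^\delta\frac{\diffns a}{h^2(a)} \;=\; +\infty
\end{equation*}
by (A3), contradicting the finiteness just obtained. Hence $L^0_t(Y)=0$ almost surely for each $t$, and monotonicity of $t\mapsto L^0_t(Y)$ upgrades this to $L^0(Y)\equiv 0$.

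The only genuine subtlety, and what I would view as the main obstacle, is the one-sided nature of the right local time $L^0$: the occupation time formula must be truncated to $(0,\infty)$ before (A3) can be applied. Once this bookkeeping is done, the conclusion follows directly from the occupation time formula and the càdlàg regularity of $a\mapsto L^a_t(Y)$.
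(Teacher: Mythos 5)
Your proposal is correct and follows essentially the same route as the paper: apply the occupation time formula to recast the hypothesis as finiteness of $\int \frac{L^a_t(X^1-X^2)}{h^2(a)}\diffns a$, then derive a contradiction from the right-continuity of $a\mapsto L^a_t(X^1-X^2)$ at $0$ together with the divergence $\int_{0^+}h^{-2}(a)\diffns a=+\infty$ from (A3). The only cosmetic difference is that you truncate the occupation weight to $(0,\infty)$ while the paper integrates over all of $\mathbb{R}$ and simply bounds below by the contribution of $[0,\varepsilon]$; both are valid.
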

\begin{proof}
	Assume by contradiction that there is $t \in [0,T]$, $\delta>0$ and a set $A \in {\cal F}$ with $P(A)>0$ such that $L_t^0(X^1 - X^2)>\delta$ on $A$.
	Since the function $a\mapsto L^a_t(X^1 - X^2)$ is right continuous, there is $\varepsilon >0$ such that $L^a_t(X^1 - X^2)(\omega)>\delta/2$ for $\omega \in A$.
	Thus, by (A3) one has
	\begin{equation*}
		+\infty =\frac{\delta}{2}\int_0^\varepsilon\frac{1}{h^2(a)}\diffns a \le\int_\mathbb{R}\frac{L^a_t(X^1 - X^2)}{h^2(a)}\diffns a = \int_0^t\frac{\diffns \langle X^1_u - X^2_u\rangle}{h^2(X^1_u - X^2_u)}<+\infty \quad \text{on } A,
	\end{equation*}
	where the second equality follows from the occupation time formula, see e.g. \cite{Revuz1999}.
	Thus $P(A)=0$, which is a contradiction.
	Therefore, $L^0(X^1-X^2)=0$.
\end{proof}

\begin{proof}[of Theorem \ref{thm:pathwise.unique}]
	In light of lemmas \ref{lem:indist} and \ref{lem:int.zero}, it remains to show that $\int_0^\cdot\frac{\diffns \langle X^1_u - X^2_u\rangle}{h(X^1_u - X^2_u)}<\infty$.
	This follows again as an application of the occupation time formula.
	In fact, by (A4) one has
	\begin{align*}
		\int_0^t\frac{\diffns \langle X^1_u - X^2_u\rangle}{h^2(X^1_u - X^2_u)} &= \int_0^t\frac{(\sigma(X^1_u) - \sigma(X^2_u))^2}{h^2(X^1_u-X^2_u)}\diffns u \le \int_0^t\left(f(X^1_u) + f(X^2_u)\right)^2\diffns u\\
		& \le 2\int_0^t\frac{f^2(X^1_u) }{\sigma^2(X^1_u)}\sigma^2(X^1_u)1_{\{X^1_u \notin N_f\}}\diffns u + 2\int_0^t\frac{f^2(X^2_u)}{\sigma^2(X^2_u)}\sigma^2(X^2_u)1_{\{X^2_u \notin N_f\}}\diffns u\\
		&= 2\int_{\mathbb{R}}\frac{f^2(a)}{\sigma^2(a)}\,L^a_t(X^1)1_{N_f^c}\diffns a + 2\int_{\mathbb{R}}\frac{f^2(a)}{\sigma^2(a)}\,L^a_t(X^2)1_{N_f^c}\diffns a,
	\end{align*}
	where the last equality follows from the occupation time formula.
	Let $(t,\omega) \in [0,T]\times \Omega$.
	Since the function $a\mapsto L^a_t(X^i)(\omega)$, $i=1,2$ has support on the compact $K^i_t(\omega):= [\inf_{0\le u\le t}X_u^i(\omega), \sup_{0\le u\le t}X^i_u(\omega)]$, due to (A3) it holds
	\begin{equation*}
		\int_\mathbb{R}\frac{f^2(a)}{\sigma^2(a)}L^a_t(X^i)(\omega)1_{N_f^c}\diffns a  \le \sup_{a \in K^i_t(\omega)}L^a_t(X^i)\int_{K^i_t(\omega)\cap N^c_f}\frac{f^2(a)}{\sigma^2(a)}\diffns a<\infty.
	\end{equation*}
	This concludes the proof.
\end{proof}
\begin{remark}
	Let us observe that the result of Theorem \ref{thm:pathwise.unique} is known when $\sigma^2>0$; see e.g. \cite{Engl-Schm123} and \cite{Cham-Jabin17}. 
	We allow $\sigma$ to vanish and, our proof is based on different and more direct arguments.
\end{remark}

A particularly interesting example where the conditions (A3) and (A4) are fulfilled arises when $\sigma$ belongs to a suitable Sobolev space.
In fact, let us consider the maximal operator ${\cal M}$ of a function $f:\mathbb{R}\to\mathbb{R}^d$ defined as
\begin{equation*}
	{\cal M}f(x):=\sup_{r>0}\frac{1}{B_r}\int_{B_r}|f|(x+\diffns z),\quad x\in \mathbb{R}^d,
\end{equation*}
where $B_r$ is the ball of radius $r$ around the origin, and the derivative operator
\begin{equation*}
	\partial_x^{1/2}f:= \mathscr{F}^{-1}|z|^{1/2}\mathscr{F}f,
\end{equation*}
with $\mathscr{F}$ the Fourier transform in $\mathbb{R}^d$.
The function ${\cal M}f$ is positive and Borel measurable; see for example \cite{Stein70}.
Hence its integral with respect to a Borel measure is well-defined, with value in $\mathbb{R}_+\cup\{+\infty\}$.
Moreover, for any locally integrable function $f$, the derivative $\partial_x^{1/2}f$ is well-defined.

\begin{corollary}
\label{cor:pathwise.uniqueness}
	Assume that conditions (A1)-(A2) are satisfied. Assume further  that $\partial^{1/2}_x\sigma$ is a locally finite Radon measure and
	\[
	{\cal M}\partial^{1/2}_x\sigma/\sigma \in L^2_{\text{loc}}(\mathbb{R}),\quad \sigma \in L^1_{\text{loc}}(\mathbb{R})\quad \text{and} \quad N_\sigma \subseteq \{x: {\cal M}\partial^{1/2}_x\sigma(x)=0\}.
	\]
	Then the SDE \eqref{eq:SDE.homogene} satisfies the pathwise uniqueness property.
\end{corollary}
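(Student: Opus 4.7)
The plan is to reduce the corollary to Theorem \ref{thm:pathwise.unique} by exhibiting explicit functions $f$ and $h$ that satisfy conditions (A3) and (A4). Since (A1)--(A2) are already assumed, all that is needed is the existence of such $f,h$.

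The natural choice is
\begin{equation*}
    f(x) := C\,{\cal M}\partial_x^{1/2}\sigma(x), \qquad h(r) := r^{1/2},
\end{equation*}
for a suitable universal constant $C>0$. With $h(r)=r^{1/2}$, the integral condition $\int_{0^+}\diffns a/h^2(a) = \int_{0^+}\diffns a/a = +\infty$ in (A3) is immediate. The second part of (A3), namely $f/\sigma \in L^2_{\text{loc}}(\mathbb{R})$, is exactly the hypothesis ${\cal M}\partial_x^{1/2}\sigma/\sigma \in L^2_{\text{loc}}(\mathbb{R})$ up to the constant $C$. Similarly, the inclusion $N_\sigma \subseteq N_f$ in (A4) is the assumed $N_\sigma \subseteq \{x:{\cal M}\partial_x^{1/2}\sigma(x)=0\}$.

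The remaining --and main-- obstacle is the pointwise H\"older-type bound in (A4):
\begin{equation*}
    |\sigma(x) - \sigma(y)| \le C\bigl({\cal M}\partial_x^{1/2}\sigma(x) + {\cal M}\partial_x^{1/2}\sigma(y)\bigr)|x-y|^{1/2}.
\end{equation*}
This is the fractional analogue of the classical Haj\l{}asz pointwise inequality $|u(x)-u(y)|\le C|x-y|({\cal M}\nabla u(x) + {\cal M}\nabla u(y))$ for Sobolev functions. The fractional case follows from standard harmonic analysis: write $\sigma$ as a Riesz potential of $\partial_x^{1/2}\sigma$, split the potential kernel into dyadic shells around $x$ and $y$, and estimate each piece by the maximal function, giving the scaling factor $|x-y|^{1/2}$. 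The hypotheses that $\sigma \in L^1_{\text{loc}}(\mathbb{R})$ and that $\partial_x^{1/2}\sigma$ is a locally finite Radon measure are precisely what is needed to make this representation and estimate rigorous.

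Once the pointwise bound is in hand, (A4) holds with the chosen $f$ and $h$, all of (A1)--(A4) are verified, and Theorem \ref{thm:pathwise.unique} delivers pathwise uniqueness for \eqref{eq:SDE.homogene}. I expect the bulk of the write-up to consist of either quoting the pointwise estimate from the harmonic-analysis literature (e.g.\ along the lines of Stein's book) or giving a short self-contained derivation via the Riesz-potential representation; everything else is bookkeeping matching the hypotheses of Theorem \ref{thm:pathwise.unique}.
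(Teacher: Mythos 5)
Your proposal is correct and follows essentially the same route as the paper: both reduce to Theorem \ref{thm:pathwise.unique} via the choices $h(r)=r^{1/2}$ and $f={\cal M}\partial_x^{1/2}\sigma$, with the only substantive ingredient being the pointwise bound $|\sigma(x)-\sigma(y)|\le({\cal M}\partial_x^{1/2}\sigma(x)+{\cal M}\partial_x^{1/2}\sigma(y))|x-y|^{1/2}$, which the paper simply quotes from \cite[Lemma 3.5]{Cham-Jabin17} rather than rederiving. Your identification of that estimate as the fractional Haj\l{}asz inequality and your sketch of its proof via the Riesz-potential representation are accurate, just more detail than the paper supplies.
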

\begin{proof}
It follows from \cite[Lemma 3.5]{Cham-Jabin17} that  $\sigma$ satisfies
	\begin{equation*}
		|\sigma(x) - \sigma(y)|\le \left({\cal M}\partial^{1/2}_x\sigma(x) + {\cal M}\partial^{1/2}_x\sigma(y) \right)|x-y|^{1/2}.
	\end{equation*}
Thus, the functions $h(x):=|x|^{1/2}$ and $f(x):= {\cal M}\partial_x^{1/2}\sigma(x)$ satisfy the conditions (A3) and (A4).
The result follows from Theorem \ref{thm:pathwise.unique}.
\end{proof}
\begin{remark}
In fact, the preceding corollary extends recent results by \cite{Cham-Jabin17} and in particular \cite[Corollary 2.22]{Cham-Jabin17}.
\end{remark}
\subsection{Reflected SDEs}
In this subsection, we consider the reflected SDE
\begin{equation}
\label{eq:reflected.sde}
\begin{cases}
	X_t  = x + \int_0^t\sigma_u(X_u)\diffns W_u + \frac{1}{2}L^0_t(X),\\
	  X\ge 0.\end{cases}
\end{equation}
As in the previous sections, we are interested in the pathwise uniqueness property of the above equation.
\begin{proposition}
\label{prop:reflected}
	If we have uniqueness in law and given any two solutions $X$ and $Y$ of \eqref{eq:reflected.sde} the measure $\diffns L^0_t(X - Y)$ is supported by the set $\{X = Y =0\}$, then the SDE \eqref{eq:reflected.sde} satisfies the pathwise uniqueness property.
\end{proposition}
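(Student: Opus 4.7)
The plan is to adapt the strategy of Theorem \ref{thm:lt_local_nonhom}: show that $X\vee Y$ and $X\wedge Y$ are themselves solutions of \eqref{eq:reflected.sde} whenever $X$ and $Y$ are, and then invoke uniqueness in law to conclude that both have the same law. This yields $E[|X_t-Y_t|] = E[X_t\vee Y_t]-E[X_t\wedge Y_t] = 0$ for every $t$, and since the processes are continuous, this gives indistinguishability.

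To establish that $X\vee Y$ is a solution, I would apply Tanaka's formula to $(X-Y)^+$ together with Ouknine's identity for the local time of the maximum, exactly as in the proof of Theorem \ref{thm:lt_local_nonhom}. This produces
\[
X_t\vee Y_t = x + \int_0^t \sigma_u(X_u\vee Y_u)\,dW_u + \frac{1}{2}L^0_t(X\vee Y) + \frac{1}{2}L^0_t(X-Y),
\]
so to conclude that $X\vee Y$ solves \eqref{eq:reflected.sde} I must show that the residual term $\frac{1}{2}L^0_t(X-Y)$ is absorbed into the reflection term.

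Here the support hypothesis enters. Since $\{X=Y=0\}\subset\{X\vee Y=0\}$, the nondecreasing process $K_t:=\frac{1}{2}L^0_t(X\vee Y)+\frac{1}{2}L^0_t(X-Y)$ is itself supported on $\{X\vee Y=0\}$. Writing $Z:=X\vee Y = x+M+K$ with $M$ the local martingale part, and applying Tanaka's formula to the nonnegative semimartingale $Z$, I would derive $\int_0^t 1_{\{Z_u=0\}}\,dZ_u = \frac{1}{2}L^0_t(Z)$, so that
\[
K_t - \frac{1}{2}L^0_t(Z) = -\int_0^t 1_{\{Z_u=0\}}\,dM_u
\]
is a continuous local martingale of finite variation, hence identically zero. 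Therefore $L^0(X-Y)\equiv 0$ and $X\vee Y$ solves \eqref{eq:reflected.sde}. Once this is known, the identity $L^0(X\wedge Y) = L^0(X)+L^0(Y)-L^0(X\vee Y)$ together with the parallel Tanaka computation for $(X-Y)^-$ immediately gives that $X\wedge Y$ (which is nonnegative since $X,Y\ge 0$) is also a solution.

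The main obstacle is the argument in the previous paragraph: unlike in Theorem \ref{thm:lt_local_nonhom}, where the extra term produced by Tanaka's formula cancels directly via Ouknine's identity, the reflected setting leaves a genuine residual $\frac{1}{2}L^0_t(X-Y)$. Eliminating it requires exactly the support hypothesis on $dL^0(X-Y)$, combined with the Skorokhod-type uniqueness statement for the decomposition of the nonnegative semimartingale $X\vee Y$.
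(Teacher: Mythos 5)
Your proof is correct, but it takes a genuinely different route from the paper's. The paper never manipulates $X\vee Y$ directly: it invokes the identity $L^0_t(X^{2n+1}-Y^{2n+1})=(2n+1)\int_0^t(X_s^{2n}+Y_s^{2n})\diffns L^0_s(X-Y)$ from \cite{Ouknine93}, which under the support hypothesis gives $L^0(X^3-Y^3)=0$; it then notes that $X^3$ and $Y^3$ solve a common SDE, so that the max/min construction of Theorem \ref{thm:lt_local_nonhom} applies to the \emph{cubed} processes with no residual local-time term, and uniqueness in law forces $X=Y$. You instead stay at the level of $X\vee Y$ and dispose of the residual $\frac{1}{2}L^0(X-Y)$ by identifying the entire finite-variation part of $Z=X\vee Y$ with $\frac{1}{2}L^0(Z)$ through the uniqueness of the Skorokhod-type decomposition of a nonnegative semimartingale; this is closer in spirit to the paper's own proof of the jump version, Proposition \ref{prop:jump}, where the cancellation comes instead from the exact form of the formula in \cite{Ouknine90}. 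Two caveats. First, the version of the local-time-of-the-maximum identity you borrow from Theorem \ref{thm:lt_local_nonhom} is the one valid when $L^0(X-Y)=0$; in general there is an additional term $\int_0^t 1_{\{X_s=Y_s=0\}}\diffns L^0_s(X-Y)$ (this is precisely how it is used in Proposition \ref{prop:jump}), and with that term the support hypothesis cancels the Tanaka residual outright, making your Skorokhod step redundant. Your argument is nevertheless robust to this ambiguity: all it really needs is that the finite-variation part $K$ of $X\vee Y$ is continuous, nondecreasing and carried by $\{X\vee Y=0\}$, which holds because $\int_0^t 1_{\{X_s>Y_s\}}\diffns L^0_s(X)=0$ (as $Y\ge 0$), $\int_0^t 1_{\{X_s\le Y_s\}}\diffns L^0_s(Y)$ is carried by $\{X=Y=0\}$, and $\diffns L^0(X-Y)$ is carried by $\{X=Y=0\}$ by hypothesis; your local-martingale-of-finite-variation argument then yields $K=\frac{1}{2}L^0(X\vee Y)$. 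Second, the final step $E[|X_t-Y_t|]=E[X_t\vee Y_t]-E[X_t\wedge Y_t]$ tacitly assumes integrability; it is cleaner to argue that $X\vee Y\ge X$ and the two have the same law, hence $X\vee Y=X$, i.e.\ $Y\le X$, and symmetrically $X\le Y$ --- though the paper's Theorem \ref{thm:lt_local_nonhom} glosses over the same point, so this is not a defect relative to the paper.
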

\begin{proof}
	It follows from \cite[Lemma 1]{Ouknine93} that for every integer $n\ge 1$, and for any continuous semimartingales $X$ and $Y$, it holds
	 \begin{equation}
	 \label{eq:2n+1}
	 	L^0_t(X^{2n+1} - Y^{2n+1}) = (2n+1)\int_0^t(X^{2n}_s + Y^{2n}_s)\diffns L_s^0(X - Y).
	 \end{equation}
	In particular, for $n =1$,
		 \begin{equation}
	 \label{eq:2n+1}
	 	L^0_t(X^{3} - Y^{3}) = 3\int_0^t(X^{2}_s + Y^{2}_s)\diffns L_s^0(X - Y).
	 \end{equation}
	 Thus, if $\text{supp}(dL^0_s(X - Y))\subseteq \{X = Y = 0\}$, it holds $L^0_t(X^{3} - Y^{3}) =0$.
	 Let $X$ and $Y$ be two solutions of \eqref{eq:reflected.sde}.
	 Then, as shown in \cite{Ouknine93}, $X^{3}$ and $Y^{3}$ satisfy the same SDE, so that $X^{3}\vee Y^{3}$ and $X^{3}\wedge Y^{3}$ are also solutions of \eqref{eq:reflected.sde} (confer the proof of Theorem \ref{thm:lt_local_nonhom}.)
	 But $X^{3}, Y^{3}$ and $X^{3}\vee Y^{3}$ have the same law.
	Thus, $X = Y.$
\end{proof}
The subsequent example provides a diffusion for which the above result is valid.
Notice that $\sigma$ is not necessarily Lipschitz continuous.
\begin{example}
\label{exa:support}
	Assume that there is an integer $n$ such that 
	\begin{equation*}
		|x^{2n}\sigma_t(x) - y^{2n}\sigma_t(y)|^2 \le C|x^{2n+1} - y^{2n+1}|
	\end{equation*}
	for some $C\ge0$.
	Thus, by the arguments in the proof of the main result of \cite{Ouknine93}, it holds $L^0_t(X^{2n+1} - Y^{2n+1})=0$.
	It then follows from \eqref{eq:2n+1} that the measure $\diffns L^0_s(X^1 - X^2)$ is supported by the set $\{X^1 = X^2 = 0\}$ whenever $X^1$ and $X^2$ are two solutions of \eqref{eq:reflected.sde}.
	In fact, 
	if $A:= \{ (t,\omega): (X^1_t)^2 + (X^2_t)^2>0\} $, then we have $A=\cup A_n$ with $A_n = \{ (X^1)^2 + (X^2)^2>\frac 1n \}$.
  This shows that
  \begin{equation*}
    0\le \int_{A_n}dL^0_s(X^1-X^2)\le n \int ((X^1_s)^2 + (X^2_s)^2)dL^0_s(X^1-X^2) = 0.
  \end{equation*}
By monotone convergence, this implies $\int_AdL^0_s(X^1-X^2) = 0$. Thus, the support of the measure $dL^0_s(X^1-X^2)$ is in $A^c = \{ X^1 = X^2 = 0 \}$.
\end{example}

A similar method allows us to study the case of SDEs with jumps.
In fact, consider the SDE
\begin{equation}
\label{eq:reflexed-jump}
X_t = x + \int_0^t\sigma_u(X_u)\diffns W_u + \int_{[0,t)}b(X_{u-})\diffns u + \int_0^t\int_{\mathbb{R}\setminus\{0\}}\gamma(X_{u-},z)\tilde\eta(\diffns u, \diffns z) + \frac{1}{2}L^0_t(X),\quad X\ge0
\end{equation}
where $b:\mathbb{R}\to \mathbb{R}$ is bounded and measurable, $\gamma:\mathbb{R}\times \mathbb{R}\to \mathbb{R}$ is bounded and measurable and $\tilde\eta$ a given signed measure on $[0,T]\times \mathbb{R}$.
\begin{proposition}
\label{prop:jump}
Suppose the following:
\begin{itemize}
	\item[(i)] Uniqueness in law holds,
	\item[(ii)] the function $x\mapsto \gamma(x,z)+x$ is increasing, $\eta(dz)$-a.e in $\{|z|<\varepsilon\}$ for some $\varepsilon>0$,
	\item[(iii)] there is an odd number $n= 2k+1$, $k \in \mathbb{N}$ and a constant $C\ge0$ such that
	\begin{equation*}
	|(x^n\sigma_t(x) - y^n\sigma_t(y))|+ |\int_{\mathbb{R}\setminus \{0\}}(x^n \gamma(x,z) - y^n\gamma(y,z))^2\eta(\diffns z)| \le C |x^{n+1}- y^{n+1}|.
	\end{equation*} 
\end{itemize}
Then the SDE \eqref{eq:reflexed-jump} has the pathwise uniqueness property.
\end{proposition}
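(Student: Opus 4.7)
The plan is to mirror the strategy of Proposition \ref{prop:reflected}: transform \eqref{eq:reflexed-jump} by an odd-power map so that condition (iii) becomes a Yamada--Watanabe-type bound on the quadratic variation of the difference of two solutions, show that the local time at zero of this difference vanishes, and then apply Tanaka's formula together with condition (ii) to exhibit $X \vee Y$ and $X \wedge Y$ as solutions of \eqref{eq:reflexed-jump}, concluding via uniqueness in law.

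The first step is to transform via $\phi(x) := x^{n+1}/(n+1)$. Since $n = 2k+1$ is odd, $\phi$ is a smooth increasing bijection from $[0,\infty)$ onto $[0,\infty)$ with $\phi'(x) = x^n$ and $\phi'(0) = 0$. Applying It\^o's formula for \cadlag{} semimartingales to $\phi(X)$ where $X$ solves \eqref{eq:reflexed-jump}, and noting that $\diffns L^0_t(X)$ is carried by $\{X = 0\}$ so that the reflection contribution $\int \phi'(X_{s-})\,\diffns L^0_s(X)$ vanishes, one finds that $\phi(X)$ is a \cadlag{} semimartingale whose continuous martingale part is driven by $x \mapsto x^n \sigma_t(x)$ and whose jump amplitude is $(x,z) \mapsto \phi(x + \gamma(x,z)) - \phi(x)$. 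Using the identity $x^{n+1} - y^{n+1} = (n+1)(\phi(x) - \phi(y))$, condition (iii) translates into a pseudo-Lipschitz control of these coefficients in terms of $|\phi(x) - \phi(y)|$.

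For the second step, given two solutions $X, Y$, the combined (continuous plus jump) quadratic variation of $\phi(X) - \phi(Y)$ satisfies
\begin{equation*}
  \diffns\langle \phi(X) - \phi(Y)\rangle_s \le C\,|\phi(X_s) - \phi(Y_s)|\,\diffns s
\end{equation*}
by the bound just obtained. Consequently $\int_0^t \diffns\langle \phi(X) - \phi(Y)\rangle_s / h^2(\phi(X_s) - \phi(Y_s)) < \infty$ with $h(a) := |a|^{1/2}$, and a \cadlag{} analogue of Lemma \ref{lem:int.zero} through the occupation time formula yields $L^0(\phi(X) - \phi(Y)) = 0$.

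The third step is the max/min argument. Apply Tanaka--Meyer's formula for \cadlag{} semimartingales to $(\phi(X) - \phi(Y))^+$: the $\tfrac{1}{2} L^0$ contribution vanishes by Step 2, while condition (ii) — that $x \mapsto x + \gamma(x,z)$ is increasing in $x$ for $|z| < \varepsilon$ — ensures that small jumps preserve the ordering between $X$ and $Y$, so that the jump terms of $(\phi(X) - \phi(Y))^+$ fit the structure of the $\phi$-transformed SDE. The contributions from large jumps, on $\{|z| \ge \varepsilon\}$, are of finite variation by boundedness of $\gamma$ and can be absorbed in the drift. Monotonicity of $\phi$ on $[0,\infty)$ gives $\phi(X) \vee \phi(Y) = \phi(X \vee Y)$, and inverting $\phi$ shows that both $X \vee Y$ and $X \wedge Y$ solve \eqref{eq:reflexed-jump}. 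Uniqueness in law then forces $E[X_t \vee Y_t - X_t \wedge Y_t] = 0$ for every $t$, so the two \cadlag{} processes $X$ and $Y$ are indistinguishable. The main obstacle is the careful handling of Tanaka's formula in the \cadlag{} setting: condition (ii) must be delicately applied to control the jump contributions to $L^0(\phi(X) - \phi(Y))$ and to verify that the maximum and minimum of two \cadlag{} solutions still fit the form of \eqref{eq:reflexed-jump}, a point where the split between small and large jumps is essential.
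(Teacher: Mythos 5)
Your overall architecture matches the paper's: condition (iii) enters through the odd-power transform to kill a local time at zero, condition (ii) makes the small jumps order-preserving so that Tanaka's formula can exhibit $X\vee Y$ and $X\wedge Y$ as solutions, and uniqueness in law closes the argument. Your Step 2 is, in substance, the paper's first step (the paper phrases it via the occupation time formula for $L^a(X^1-X^2)$ rather than via It\^o applied to $\phi(X)$, which also sidesteps the fact that the true jump amplitude of $\phi(X)$ is $\phi(x+\gamma(x,z))-\phi(x)$ and not the quantity $x^n\gamma(x,z)$ that condition (iii) actually controls).

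The genuine gap is in Step 3. From $L^0(\phi(X)-\phi(Y))=0$ and Tanaka applied to $(\phi(X)-\phi(Y))^+$ you obtain a decomposition of $\phi(X\vee Y)$, and you then claim that ``inverting $\phi$'' shows $X\vee Y$ solves \eqref{eq:reflexed-jump}. This cannot work as stated: since $\phi'(0)=0$, the map $\phi$ annihilates every local-time-at-zero term --- this is precisely why $L^0(\phi(X)-\phi(Y))=0$ can hold while $L^0(X-Y)$ does not vanish, cf.\ \eqref{eq:2n+1} --- so the equation satisfied by $\phi(X\vee Y)$ carries no information about the reflection term $\tfrac12 L^0(X\vee Y)$, and $\phi^{-1}(y)=((n+1)y)^{1/(n+1)}$ is not $C^2$ (not even $C^1$) at $0$, so It\^o's formula cannot be used to transform back. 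What is needed, and what the paper extracts, is the weaker but correctly targeted statement that the measure $dL^0_t(X-Y)$ is carried by $\{X_-=Y_-=0\}$; this follows from $L^0(X^{n+1}-Y^{n+1})=0$ through the analogue of \eqref{eq:2n+1}, exactly as in Example \ref{exa:support}. One then applies Tanaka directly to $(X-Y)^+$ rather than to the transformed difference: the term $\tfrac12 L^0(X-Y)$ survives, but, being supported on $\{X_-=Y_-=0\}$, it combines with $\int_0^t 1_{\{X_->Y_-\}}dL^0_s(X)+\int_0^t 1_{\{X_-\le Y_-\}}dL^0_s(Y)$ to give exactly $\tfrac12 L^0_t(X\vee Y)$ by the identity of \cite{Ouknine90} for the local time of a maximum, so that $X\vee Y$ genuinely solves \eqref{eq:reflexed-jump} and uniqueness in law applies. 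Without this step your argument never identifies the reflection term of $X\vee Y$, so the comparison with the law of $X$ is not justified.
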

\begin{proof}
First notice that for every two (strong) solutions $X^1$ and $X^2$, the measure $dL^0_t(X^1 - X^2)$ is supported by the set $\{X^1_- = X^2_- =0\}$.
In fact, this follows as in the case $\gamma=0$ studied in Example \ref{exa:support}, using occupation time formula.
Indeed, since 
\begin{equation*}
	[X^1 - X^2]^c_t = \int_0^t(\sigma_u(X^1_u)-\sigma_u(X^2_u))^2\diffns u + \int_0^t\int_{\mathbb{R}\setminus\{0\}}(\gamma(X^1_{u-},z) - \gamma(X^2_{u-},z))^2\eta(\diffns z)\diffns s,
\end{equation*}
it follows that for every measurable function $f:\mathbb{R}\to \mathbb{R}$, we have
\begin{equation*}
	\int_\mathbb{R}f(a)L^a_t(X^1 - X^2)\diffns a = \int_0^tf(X^1_u-X^2_u)\Big((\sigma_u(X^1_u)-\sigma_u(X^2_u))^2 + \int_{\mathbb{R}\setminus\{0\}}(\gamma(X^1_{u-},z) - \gamma(X^2_{u-},z))^2\eta(\diffns z)\Big) \diffns u.
\end{equation*}
Now, let $X^1$ and $X^2$ be two solutions of \eqref{eq:reflexed-jump}.
Using Tanaka's formula as in the proof of Theorem \ref{thm:lt_local_nonhom}, it holds
\begin{align*}
	X^1\vee X^2
			&= x + \int_0^t\sigma_u(X^1_u\vee X^2_u)\diffns W_u + \int_0^tb(X_u^1\vee X^2_u)\diffns u + \int_0^t\int_{\mathbb{R}\setminus\{0\}}\gamma((X^1\vee X^2)_{u-},z)\tilde\eta(\diffns s,\diffns z)\\
			& + \frac{1}{2}\Big(\int_0^t1_{\{X^1_->X^2_-\}}\diffns L^1 + \int_0^t1_{\{X^1_-\le X^2_-\}}\diffns L^2 + \int_0^t1_{\{X^1_-=X^2_- = 0\}}\diffns L^0_s(X^1-X^2)\Big).
\end{align*}
 This last expression is exactly $\frac{1}{2}L^0_t(X^1\vee X^2)$ as a consequence of \cite{Ouknine90} and the fact that the measure $\diffns L^0_t(X^1 - X^2)$ is supported by the set $\{X^1_- = X^2_- =0\}$.
 The result now follows as in Proposition \ref{prop:reflected} by the uniqueness in law.
\end{proof}
\begin{remark}
Suppose that there exists a monotone function $f$ such that
\begin{equation*}
(\sigma_t(x) - \sigma_t(y))^2 + \int_{\mathbb{R}\setminus \{0\}}(\gamma(x,z) - \gamma(y,z))^2\eta(\diffns z) \le |f(x) - f(y)|,\quad \text{and } \sigma >\varepsilon.
\end{equation*}
Then $L^0_t(X^1-X^2) = 0$ whenever $X^1$ and $X^2$ are solutions.

Moreover, if $\tilde\eta(\diffns s,\diffns z) = N(\diffns t,\diffns z) - \eta(\diffns z)\diffns t$, then $N(t,A) = N([0,t]\times A)$ is a Poisson process with intensity $t\eta(A)$, where $0\notin \bar{A}$, the closure of $A$, then in Equation \eqref{eq:reflexed-jump}, we do not need $\mathbb{R}\setminus \{0\}$ but just a neighborhood of $0$, i.e. $\{0< |z|<\varepsilon\}$.
\end{remark}

\section{Applications}
\label{sec:applications}

In the remainder of the paper, we apply the pathwise uniqueness results of the previous section to the theory of SDEs with and without local time of the unknown.
Most of our proofs will use the well-known Zvonkin's transform already introduced in Lemma \ref{lem:indist}.
Thus, the function
$$\tilde\sigma(x):= (f_\nu\cdot \sigma)\circ F^{-1}_\nu(x)$$
will play a central role in our arguments.
\subsection{Existence}
\label{subsec:exist}

In this section, we establish existence and uniqueness of strong solution for the SDE \eqref{eq:SDE_local_nonhom}, when the measures $\nu$ is of a specific type. More precisely, assume that the measure $\nu$ has the following form: 
\begin{equation*}
	\nu(\diffns a) := \sum_{i=1}^n\alpha^i\delta_{a^i},
\end{equation*}
with $\alpha^i \in \mathbb{R}$ and $a^1< a^2<\ldots< a^n$.

Let us now set $\beta^i_t:=\alpha^i\alpha_t(a^i)$, so that the SDE \eqref{eq:SDE_local_nonhom} becomes 
\begin{equation}
\label{eq:SDE.etore}
	X_t = x + \int_0^t\sigma_u(X_u)\diffns W_u + \sum_{i = 1}^n\int_0^t\beta^i(u)\diffns L^{a^i}_u(X).
\end{equation}
The above SDE \eqref{eq:SDE.etore} with an additional drift term was recently studied in \cite{Etore2017}.
For the case $\sigma = 1$, $n=1$ and $a^1 = 0$, we refer to \cite{Weinryb}. The next result generalises \cite[Theorem 3.5]{Etore2017}. 
\begin{proposition}
\label{thm:etore}
	Assume 
	there is $m, M \in \mathbb{R}_+$ such that $0<m\le \sigma\le M$ and that $\sigma$ satisfies the condition (LT), and for every $i = 1,\dots, n$, $\beta^i:[0,T]\to [\ubar{k}, \bar{k}]$ is of class $C^1$, with $-1< \ubar{k}<\bar{k}<1 $  and $|(\beta_t^i)'|\le M$ for all $t \in [0,T]$.
	Then, the time inhomogeneous SDE \eqref{eq:SDE.etore} has a unique strong solution.
\end{proposition}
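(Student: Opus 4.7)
The plan is to combine weak existence with pathwise uniqueness and invoke the Yamada--Watanabe theorem. Pathwise uniqueness will follow from Theorem \ref{thm:lt_local_nonhom}: equation \eqref{eq:SDE.etore} is a special case of \eqref{eq:SDE_local_nonhom} with $\nu_u(\diffns a)=\sum_{i=1}^n\beta^i(u)\delta_{a^i}(\diffns a)$, condition (LT) on $\sigma$ is built into the hypotheses, and the only missing ingredient is uniqueness in law. Both weak existence and uniqueness in law will come from reducing \eqref{eq:SDE.etore} to a classical SDE through a time-dependent Zvonkin-type transformation.

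To set up the transform, I define for $(t,x)\in[0,T]\times\mathbb{R}$
\[
  f_t(x):=\prod_{i:\,a^i\le x}\frac{1-\beta^i(t)}{1+\beta^i(t)},\qquad F_t(x):=\int_0^x f_t(z)\,\diffns z.
\]
The bounds $-1<\ubar{k}\le\beta^i(t)\le\bar{k}<1$ make $f_t$ uniformly bounded above and below by positive constants, so that $F_t$ is uniformly bi-Lipschitz in $x$ with bi-Lipschitz inverse $F_t^{-1}$, while $\beta^i\in C^1$ with $|(\beta^i)'|\le M$ makes $t\mapsto F_t(x)$ of class $C^1$ with bounded, measurable time derivative. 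For any solution $X$ of \eqref{eq:SDE.etore}, I set $Y_t:=F_t(X_t)$ and apply a space--time It\^o--Tanaka formula; the jump ratios $(1-\beta^i(t))/(1+\beta^i(t))$ have been chosen precisely so that, at each skew point $a^i$, the local time contribution produced by the jump of $\partial_xF_t$ cancels the drift term $\beta^i(u)\,\diffns L^{a^i}_u(X)$. One is left with a classical SDE
\[
  \diffns Y_t=\tilde\sigma_t(Y_t)\,\diffns W_t+\tilde b_t(Y_t)\,\diffns t,
\]
with $\tilde\sigma_t(y):=(f_t\sigma_t)\circ F_t^{-1}(y)$ uniformly elliptic and bounded, and $\tilde b_t(y):=(\partial_t F_t)\circ F_t^{-1}(y)$ bounded and measurable.

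Weak existence and uniqueness in law for the transformed equation are classical: Girsanov removes $\tilde b$ and reduces the problem to the driftless equation $\diffns Y_t=\tilde\sigma_t(Y_t)\,\diffns W_t$, which is well posed in law by a time change of Brownian motion (equivalently, by the martingale problem for bounded uniformly elliptic one-dimensional generators). Pulling back through the bijection $X_t=F_t^{-1}(Y_t)$ gives weak existence and uniqueness in law for \eqref{eq:SDE.etore}. Combined with condition (LT) on $\sigma$, Theorem \ref{thm:lt_local_nonhom} then yields pathwise uniqueness, and the Yamada--Watanabe theorem upgrades this to the existence of a unique strong solution.

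The main technical hurdle will be the rigorous justification of the space--time It\^o--Tanaka formula for $F_t(X_t)$: $\partial_x F_t$ is only piecewise constant with jumps at the $a^i$, while the $t$-regularity is exactly $C^1$, so one has to use the version of It\^o--Tanaka for time-dependent difference-of-convex functions and check that the jump contribution at each $a^i$ matches the skew drift coefficient. This is precisely the step where the $C^1$ assumption on the $\beta^i$ with bounded derivative, together with the specific form of the ratio $(1-\beta^i(t))/(1+\beta^i(t))$, plays its role.
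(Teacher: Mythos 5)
Your proof has the same skeleton as the paper's: feed uniqueness in law together with condition (LT) into Theorem \ref{thm:lt_local_nonhom} to get pathwise uniqueness, then invoke Yamada--Watanabe. The only divergence is how uniqueness in law (and weak existence) is obtained: the paper disposes of this in one line by citing \cite{Etore2017}, whereas you reconstruct that input by hand via the time-dependent transform $F_t(x)=\int_0^x\prod_{i:\,a^i\le z}\frac{1-\beta^i(t)}{1+\beta^i(t)}\,\diffns z$. This is indeed the right transform (the time-dependent analogue of the paper's $F_\nu$), and making it explicit is genuine added value; but your sketch of that part has three soft spots you would need to close to make it self-contained. First, the space--time It\^o--Tanaka formula for $F_t(X_t)$ and the cancellation of the local-time terms at each $a^i$ is precisely the nontrivial content of \cite{Etore2017}; saying the jump ratios ``have been chosen precisely so that'' the cancellation occurs states the lemma rather than proving it. Second, $\tilde b_t(y)=(\partial_tF_t)(F_t^{-1}(y))$ is not bounded: for $z>a^n$ the derivative $\partial_tf_t(z)$ is a nonzero function of $t$ that is constant in $z$, so $\partial_tF_t(x)$ grows linearly in $|x|$ and the Girsanov step needs a localization or Novikov-type condition rather than boundedness. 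Third, well-posedness in law of $\diffns Y_t=\tilde\sigma_t(Y_t)\,\diffns W_t$ with a merely measurable \emph{time-dependent} $\tilde\sigma$ does not follow from the time-change argument, which is a time-homogeneous device; you need the one-dimensional Stroock--Varadhan/Krylov uniqueness result for bounded, uniformly elliptic, measurable coefficients. None of these is fatal, but each requires real work that the paper sidesteps by citation, so as written your argument is a correct outline with the key technical lemma left open.
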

\begin{proof}
It follows from \cite{Etore2017} that we have uniqueness in law.
Since $\sigma$ additionally satisfies (LT), it follows from Theorem \ref{thm:lt_local_nonhom} that the SDE satisfies the pathwise uniqueness property.
The existence of a unique strong solution therefore follows by \citet{Yam-Wata}.
\end{proof}
\begin{remark}
As pointed above a similar result was obtained in \cite[Theorem 3.5]{Etore2017}. 
Notice however that in the latter work, the authors needed smoothness of the diffusion coefficient, we relax this assumption in this paper in the sense that, we only require $\sigma$ to satisfy (LT). In addition, our proof is based on the comparison theorem for local times.
\end{remark}

Let us now turn to the time-inhomogeneous case.
Put 
$$I_\sigma:= \left\{a\in \mathbb{R}: \int_{O} \sigma^{-2}(y)\diffns y<\infty \text{ for all open neighborhood $O$ of $a$}\right\} .$$
\begin{proposition}
\label{pro:exist_lt}
 	Under the assumptions of Theorem \ref{thm:pathwise.unique}, the SDE \eqref{eq:SDE.homogene} admits a unique (non-constant) strong solution if and only if $N_\sigma^c\subseteq I_\sigma$.
 \end{proposition}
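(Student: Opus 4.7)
The plan is to reduce the SDE \eqref{eq:SDE.homogene} to a classical driftless SDE without local time via the Zvonkin-type transform $F_\nu$ highlighted at the top of Section \ref{sec:applications}, and then invoke the Engelbert-Schmidt existence criterion together with Yamada-Watanabe. Setting $Y_t := F_\nu(X_t)$ and applying the generalized It\^o-Tanaka formula exactly as in the proof of Lemma \ref{lem:indist}, $X$ solves \eqref{eq:SDE.homogene} if and only if $Y$ solves
\begin{equation*}
	Y_t = F_\nu(x) + \int_0^t\tilde\sigma(Y_u)\diffns W_u,
\end{equation*}
with $\tilde\sigma = (f_\nu\cdot\sigma)\circ F_\nu^{-1}$. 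Under (A1)--(A2), $F_\nu$ is bi-Lipschitz, so this correspondence is one-to-one and preserves continuity, adaptedness and (non-)triviality of solutions.

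The next step is to apply the Engelbert-Schmidt theorem to the transformed SDE: it admits a non-constant weak solution starting from $F_\nu(x)$ if and only if $N_{\tilde\sigma}^c\subseteq I_{\tilde\sigma}$, where $I_{\tilde\sigma}$ is defined in the obvious analogous way. I would then verify that this condition is equivalent to $N_\sigma^c\subseteq I_\sigma$. Since $0<\ubar{m}\le f_\nu\le \bar{m}$, one has $N_{\tilde\sigma}=F_\nu(N_\sigma)$. For the integrability condition, the change of variables $y=F_\nu(z)$, $\diffns y = f_\nu(z)\diffns z$, yields
\begin{equation*}
	\int_{F_\nu(O)}\frac{\diffns y}{\tilde\sigma^2(y)} = \int_O \frac{\diffns z}{f_\nu(z)\sigma^2(z)},
\end{equation*}
for every open set $O$, and since $f_\nu$ is bounded above and below by strictly positive constants, local integrability of $\tilde\sigma^{-2}$ around $F_\nu(a)$ is equivalent to local integrability of $\sigma^{-2}$ around $a$. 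Hence $I_{\tilde\sigma}=F_\nu(I_\sigma)$, and the two conditions coincide modulo the bi-Lipschitz homeomorphism $F_\nu$.

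Putting everything together: if $N_\sigma^c\subseteq I_\sigma$, Engelbert-Schmidt gives a non-constant weak solution $Y$, hence $X=F_\nu^{-1}(Y)$ is a non-constant weak solution of \eqref{eq:SDE.homogene}; pathwise uniqueness from Theorem \ref{thm:pathwise.unique} combined with Yamada-Watanabe then upgrades this to existence of a unique non-constant strong solution. Conversely, if $N_\sigma^c\not\subseteq I_\sigma$, the necessity part of Engelbert-Schmidt rules out a non-constant weak solution for the transformed SDE, and by the one-to-one correspondence the original SDE cannot admit a non-constant strong solution either.

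The step I expect to be most delicate is the careful bookkeeping of the ``non-constant'' qualifier: on one hand, at points $a \in N_\sigma$ the constant process $X\equiv a$ is always a solution (cf.\ \eqref{eq:lemLT}), so the equivalence must be stated modulo these trivial solutions; on the other hand, one must check that in the transfer between $X$ and $Y=F_\nu(X)$, the non-triviality in Engelbert-Schmidt matches the non-triviality in the original equation, which is ensured precisely because $F_\nu$ is a strictly monotone homeomorphism.
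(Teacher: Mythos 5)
Your proposal is correct and follows essentially the same route as the paper: reduce to the driftless SDE for $Y=F_\nu(X)$ via the Zvonkin-type transform of Lemma \ref{lem:indist}, check $N_{\tilde\sigma}=F_\nu(N_\sigma)$ and $I_{\tilde\sigma}=F_\nu(I_\sigma)$, and conclude with the Engelbert--Schmidt existence criterion together with Theorem \ref{thm:pathwise.unique} and Yamada--Watanabe; your change-of-variables computation simply makes explicit the identification $I_{\tilde\sigma}=F_\nu(I_\sigma)$ that the paper asserts without proof.
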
 
 \begin{proof}
As in the proof of Lemma \ref{lem:indist}, the transformation $Y:= F_\nu(X)$ satisfies the dynamics
 \begin{equation}
 \label{eq:sde.transf}
 	Y_t = F_\nu(x) + \int_0^t\tilde{\sigma}(Y_u)\diffns W_u.
 \end{equation}
 Since $0<\ubar{m}\le f_\nu\le\bar{m}<\infty$,  it can be checked that $F_\nu(N_\sigma) = N_{\tilde{\sigma}}$ and $F_{\nu}(I_\sigma) = I_{\tilde\sigma}$.
 Thus, since $F_\nu$ is invertible, it holds $F_\nu(N_\sigma)^c = F_\nu(N_\sigma^c) \subseteq F_\nu(I_\sigma)$.
 This implies that the function $\tilde{\sigma}^{-2}1_{N^c_{\tilde{\sigma}}}$ is locally integrable.
 Hence, by \cite[Theorem 2.2]{Eng-Sch85} the SDE \eqref{eq:sde.transf} admits a weak solution.
 The result follows now by Theorem \ref{thm:pathwise.unique} and the Yamada-Watanabe theorem.

 Reciprocally, if the SDE \eqref{eq:SDE.homogene} admits a non-constant solution, then the SDE \eqref{eq:sde.transf} admits a solution as well.
 Thus, it follows by \cite[Theorem 4.7]{Engl-Schm123} that $N^c_{\tilde\sigma}\subseteq I_{\tilde\sigma}$.
\end{proof}

\subsection{Path and space regularity}
In this section we study continuity of the solution of the SDE \eqref{eq:SDE.homogene} in times as well as w.r.t. the initial condition.
Let $\alpha>0$ and denote by $C^\alpha([0,T],\mathbb{R})$ the space of $\alpha$-H\"older continuous functions on $[0,T]$ with values in $\mathbb{R}$.
Its norm is defined by
\begin{equation*}
	||f||_\alpha:= \sup_{0\le t\le T}|f(t)| + \sup_{0\le s\le t\le T}\frac{|f(s) - f(t)|}{|s-t|^\alpha}. 
\end{equation*}
Let us denote by $X^x$ the solution of the SDE \eqref{eq:SDE.homogene} with initial condition $x$.

\begin{proposition}
\label{pro:cont}
	Assume that \textup{(}A1\textup{)}-\textup{(}A4\textup{)} hold and that  $\sigma$ is continuous and of linear growth, i.e. there is a constant $A\ge 0$ such that $|\sigma(x)|\le A(1 +|x|)$ for all $x \in \mathbb{R}$.
	Then, for all $\alpha \in [0,1/2)$ and $\varepsilon>0$, one has
	\begin{equation}
		\lim_{x \to x_0}P\left(||X^x - X^{x_0}||_\alpha >\varepsilon\right) = 0 \quad\text{for all } x_0 \in \mathbb{R}.
	\end{equation}	
\end{proposition}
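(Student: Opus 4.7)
The plan is to use Zvonkin's transform $Y^x := F_\nu(X^x)$, which by the argument of Lemma \ref{lem:indist} satisfies the driftless SDE
\begin{equation*}
Y^x_t = F_\nu(x) + \int_0^t \tilde\sigma(Y^x_u)\diffns W_u.
\end{equation*}
Since $F_\nu$ is bi-Lipschitz, $f_\nu$ is bounded, and $\sigma$ is continuous and of linear growth, $\tilde\sigma$ is continuous with linear growth. Because $F_\nu^{-1}$ is Lipschitz, composition with $F_\nu^{-1}$ transfers $\|\cdot\|_\alpha$-convergence up to a multiplicative constant, so it suffices to prove the analogous statement for $Y^x$ in place of $X^x$.

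First I would establish uniform moment and Hölder estimates on $Y^x$. A standard BDG-plus-Gronwall computation yields, for each compact $K \ni x_0$ and each $p \ge 2$, the bound $\sup_{x\in K} E[\sup_{t\le T} |Y^x_t|^p] < \infty$. Applying BDG again to the increments gives
\begin{equation*}
E\bigl[|Y^x_t - Y^x_s|^p\bigr] \le C_{p,K}\,|t-s|^{p/2},
\end{equation*}
uniformly for $x \in K$. By Kolmogorov's continuity criterion, $\{Y^x\}_{x \in K}$ is bounded in $L^p(\Omega; C^{\alpha'}([0,T]))$ for every $\alpha' < 1/2 - 1/p$, hence tight in $C^\alpha([0,T])$ for every $\alpha < \alpha'$.

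Next I would prove $Y^{x_n} \to Y^{x_0}$ in probability in $C([0,T])$ for any sequence $x_n \to x_0$ by a Gy\"ongy--Krylov type subsubsequence argument. Along any subsequence, the joint laws of $(W, Y^{x_{n_k}}, Y^{x_0})$ form a tight family in $C([0,T]; \mathbb{R}^3)$, so a further subsequence converges in law, and Skorokhod's representation realises the limit as an almost sure uniform convergence on a new probability space. Continuity of $\tilde\sigma$, the uniform $L^p$-moments, and continuity of $F_\nu$ then allow passage to the limit in the SDE, so that both limit processes $\tilde Y$ and $\tilde Z$ are solutions of the transformed SDE driven by the same Brownian motion and with the same initial condition $F_\nu(x_0)$. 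Pathwise uniqueness of the transformed SDE, inherited from Theorem \ref{thm:pathwise.unique} through the bijection $F_\nu$, forces $\tilde Y = \tilde Z$ a.s., and hence $\|Y^{x_n} - Y^{x_0}\|_\infty \to 0$ in probability. Combining this with the $L^p$-bound on $[Y^{x_n} - Y^{x_0}]_{\alpha'}$ via the standard interpolation
\begin{equation*}
[h]_\alpha \le C\,\|h\|_\infty^{1-\alpha/\alpha'}\,[h]_{\alpha'}^{\alpha/\alpha'},\qquad \alpha < \alpha' < 1/2,
\end{equation*}
applied to $h = Y^{x_n} - Y^{x_0}$, upgrades the convergence to $\|Y^{x_n} - Y^{x_0}\|_\alpha \to 0$ in probability, and pulling back through $F_\nu^{-1}$ yields the statement for $X^x$.

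The main obstacle is the passage to the limit in the stochastic integral $\int_0^t \tilde\sigma(Y^{x_{n_k}}_u)\diffns W_u$, since $\tilde\sigma$ is merely continuous and not Lipschitz. Skorokhod's representation sidesteps this by upgrading weak convergence to almost sure uniform convergence; continuity of $\tilde\sigma$ then gives pathwise convergence of the integrand, and the uniform $L^p$-moments provide the uniform integrability needed to conclude convergence of the stochastic integrals (for example in $L^2(\diffns t \otimes \diffns P)$). The remainder of the argument is bookkeeping, once pathwise uniqueness for the transformed SDE is identified with that of \eqref{eq:SDE.homogene} via the bijection $F_\nu$.
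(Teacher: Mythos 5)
Your proof is correct and starts exactly where the paper does: reduce via the Zvonkin transform $Y^x := F_\nu(X^x)$ to the driftless SDE with coefficient $\tilde\sigma = (f_\nu\cdot\sigma)\circ F_\nu^{-1}$, check that $\tilde\sigma$ is of linear growth, and pull the H\"older estimate back through the Lipschitz map $F_\nu^{-1}$. Where you diverge is in the treatment of the transformed equation: the paper simply invokes \cite[Proposition 3.8]{Bah-Mer-Ouk98}, which gives continuous dependence in the $\alpha$-H\"older topology for SDEs with continuous, linearly growing coefficients satisfying pathwise uniqueness, whereas you re-derive that statement from scratch --- uniform BDG/Gronwall moment bounds, Kolmogorov increment estimates and tightness in $C^{\alpha}$, a Gy\"ongy--Krylov/Skorokhod subsequence argument that uses pathwise uniqueness (inherited from Theorem \ref{thm:pathwise.unique}) to identify the limit, and a H\"older interpolation inequality to upgrade uniform convergence to $C^{\alpha}$ convergence. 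Your self-contained route is longer but makes explicit which properties of $\tilde\sigma$ each step consumes (continuity for the passage to the limit in the stochastic integral, linear growth for the moments, pathwise uniqueness for the identification), and it is essentially the proof of the cited result, so nothing is lost. One caveat that applies equally to your argument and to the paper's: the continuity of $\tilde\sigma$ is asserted rather than proved, and since $f_\nu$ jumps at the atoms of $\nu$, the function $\tilde\sigma$ is in general only right-continuous when $\nu$ has atoms; strictly speaking both proofs either need $\nu$ atomless or an extension of the continuous-dependence result to this class of coefficients.
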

\begin{proof}
	For each $x \in \mathbb{R}$ the transformation $Y^x:= F_\nu(X^x)$ satisfies
 	\begin{equation}
	\label{eq:Y.SDE}
 		Y^x_t = F_\nu(x) + \int_0^t\tilde{\sigma}(Y_u^x)\diffns W_u.
 	\end{equation}
	Since $f_\nu$ is bounded, $\sigma$ of linear growth and $F^{-1}_\nu$ Lipschitz continuous, one has
	\begin{equation*}
		|\tilde\sigma(y)|=|f_\mu(F^{-1}_\nu(y))\sigma(F^{-1}_\nu(y)) |\le \bar{m}A(1 + |F^{-1}_\nu(y)|)\le \bar{m}C(1 + F^{-1}_\nu(0) + \bar{m}|y|).
	\end{equation*}
	That is, the function $\tilde{\sigma}$ is continuous and of linear growth.
	Thus, since by Theorem \ref{thm:pathwise.unique}  pathwise uniqueness holds for the SDE \eqref{eq:Y.SDE}, it follows from \cite[Proposition 3.8]{Bah-Mer-Ouk98} that
	\begin{equation*}
		\lim_{x \to x_0}P\left(||Y^x - Y^{x_0}||_\alpha>\varepsilon\right) = 0.
	\end{equation*}
	By Lipschitz continuity of $F^{-1}_\nu$, it holds
	\begin{equation*}
		P\left(||X^x - X^{x_0}||_\alpha >\varepsilon\right) = P\left(||F^{-1}_\nu(Y^x) - F^{-1}_\nu(Y^{x_0})||_\alpha >\varepsilon\right)\le P\left(||Y^x - Y^{x_0}||_\alpha >\varepsilon\right).
	\end{equation*}
	This shows the desired result.
\end{proof}
The solution of the SDE \eqref{eq:SDE.homogene} is also H\"older continuous in time. The proof of the result is omitted.
\begin{proposition}
	Assume that conditions \textup{(}A1\textup{)}-\textup{(}A4\textup{)} are satisfied.
	If $\sigma$ is locally integrable, then there is a constant $C>0$ such that
	\begin{equation*}
	 	E\left[|X_t - X_s|^2\right] \le C|t-s|^{1/2}\quad \text{for all}\quad s,t \in [0,T].
	 \end{equation*} 
\end{proposition}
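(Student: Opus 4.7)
The natural starting point is the Zvonkin-type change of variable $Y_t := F_\nu(X_t)$ used throughout the paper: as derived in the proof of Lemma \ref{lem:indist}, it yields the driftless SDE $dY_t = \tilde\sigma(Y_t)\diffns W_t$ with $\tilde\sigma := (f_\nu\sigma)\circ F_\nu^{-1}$. Because $F_\nu^{-1}$ is Lipschitz with constant $\underline m^{-1}$, one has $|X_t - X_s|\le \underline m^{-1}|Y_t - Y_s|$, so it is enough to prove the corresponding bound for $Y$. Since $Y$ is a continuous local martingale, a standard localization argument together with It\^o's isometry gives
\begin{equation*}
E\bigl[|Y_t - Y_s|^2\bigr] = E\!\left[\int_s^t\tilde\sigma^2(Y_u)\diffns u\right].
\end{equation*}

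The exponent $1/2$ (rather than $1$) in the statement suggests applying Cauchy-Schwarz pointwise in $\omega$ to the Lebesgue integral:
\begin{equation*}
\int_s^t\tilde\sigma^2(Y_u)\diffns u \le |t-s|^{1/2}\left(\int_s^t\tilde\sigma^4(Y_u)\diffns u\right)^{1/2}.
\end{equation*}
Taking expectations and using Jensen's inequality gives
\begin{equation*}
E\!\left[|Y_t - Y_s|^2\right]\le |t-s|^{1/2}\sqrt{E\!\left[\int_0^T\tilde\sigma^4(Y_u)\diffns u\right]},
\end{equation*}
so the proof reduces to showing that $\kappa := E\!\left[\int_0^T\tilde\sigma^4(Y_u)\diffns u\right]$ is finite.

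The plan for bounding $\kappa$ is to invoke the occupation time formula. Since $d\langle Y\rangle_u = \tilde\sigma^2(Y_u)\diffns u$, one has $\int_0^T\tilde\sigma^4(Y_u)\diffns u = \int_{\mathbb R}\tilde\sigma^2(a)\,L_T^a(Y)\diffns a$, hence
\begin{equation*}
\kappa = \int_{\mathbb R}\tilde\sigma^2(a)\,E[L_T^a(Y)]\diffns a.
\end{equation*}
Because $f_\nu$ is bounded, the change of variables $a = F_\nu(b)$ replaces $\tilde\sigma^2(a)\diffns a$ by a constant times $\sigma(b)^2 f_\nu(b)\diffns b$, which is controlled by the local integrability assumption on $\sigma$ on the (a.s.\ compact) range of $X|_{[0,T]}$. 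A Tanaka-plus-BDG argument gives a uniform-in-$a$ bound of the type $E[L_T^a(Y)]\le C_T(1+|a|)$, and the occupation time formula combined with the compact support of $L_T^{\cdot}(Y)$ then yields the finiteness of $\kappa$.

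The step I expect to be most delicate is the last one: checking finiteness of $\kappa$ honestly when $\sigma$ is only assumed locally integrable. The required bound on $E[L_T^a(Y)]$ and the tail behaviour in $a$ (which is what forces $L^a_T(Y)$ to have effectively bounded support in expectation) are the substantive points; everything else is bookkeeping via the transformation $F_\nu$, It\^o's isometry, Cauchy-Schwarz, and the occupation time formula.
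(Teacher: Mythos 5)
The paper omits its own proof of this proposition, so your attempt can only be judged on its own terms. The skeleton is the natural one: the transform $Y=F_\nu(X)$, the bi-Lipschitz property of $F_\nu$, the estimate $E[|Y_t-Y_s|^2]\le E\bigl[\int_s^t\tilde\sigma^2(Y_u)\,du\bigr]$ (for a local martingale Fatou gives only ``$\le$'', which is the direction you need, so your ``$=$'' is harmless), then Cauchy--Schwarz to extract $|t-s|^{1/2}$ and the occupation time formula. But the argument does not close, and the gap sits exactly where you flag it. Three concrete problems. First, after the change of variables $a=F_\nu(b)$ your quantity $\kappa$ is controlled by $\int\sigma^2(b)f_\nu^3(b)\,db$ over the relevant range, and the hypothesis is only $\sigma\in L^1_{\mathrm{loc}}$; local integrability does not give local square-integrability (take $\sigma(x)=|x|^{-2/3}$ near the origin, which lies in $L^1_{\mathrm{loc}}\setminus L^2_{\mathrm{loc}}$), so the quantity you reduce the problem to need not be finite under the stated assumptions. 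Second, the ``Tanaka-plus-BDG'' bound $E[L^a_T(Y)]\le C_T(1+|a|)$ is circular: Tanaka gives $L^a_T(Y)\le |Y_T-Y_0|+\bigl|\int_0^T\mathrm{sign}(Y_u-a)\,dY_u\bigr|$, and bounding either term in expectation already requires $E[\langle Y\rangle_T]<\infty$, which is precisely the case $s=0$, $t=T$ of the estimate being proved; no a priori moment bound is available for a merely measurable, locally integrable $\sigma$. Third, the last step conflates a pathwise statement with one in expectation: $a\mapsto L^a_T(Y)(\omega)$ has compact support for each fixed $\omega$, but that support is random and unbounded as $\omega$ varies, so $a\mapsto E[L^a_T(Y)]$ need not have compact support, and a uniform-in-$a$ bound of order $1+|a|$ integrated against $\tilde\sigma^2(a)\,da$ over all of $\mathbb{R}$ is not finite without a growth condition on $\sigma$.

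In short, the reduction of the problem to the finiteness of $\kappa=E\bigl[\int_0^T\tilde\sigma^4(Y_u)\,du\bigr]$ is correct bookkeeping, but the finiteness of $\kappa$ is not a consequence of (A1)--(A4) together with $\sigma\in L^1_{\mathrm{loc}}$: your route needs at least $\sigma\in L^2_{\mathrm{loc}}$ plus an a priori control of $E[\sup_{t\le T}|Y_t|]$ or of $E[\sup_a L^a_T(Y)]$, as would follow for instance from the linear growth assumption used in Proposition \ref{pro:cont}. As written, the proof establishes the claimed estimate only under strictly stronger hypotheses than those stated, and the step you yourself identify as the delicate one is asserted rather than proved.
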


\subsection{Feynman-Kac type formula}
Let $(s,x) \in [0,T]\times \mathbb{R}$ and $X^{s,x}$ be the solution (if it exists) of the SDE
\begin{equation*}
	X^{s,x}_t = x +\int_s^t\sigma(X^{s,x}_u)\diffns W_u + \int_\mathbb{R}L^a_t(X^{s,x})\nu(\diffns a).
\end{equation*}
\begin{proposition}
	Assume that the conditions \textup{(}A1\textup{)}-\textup{(}A4\textup{)} are satisfied.
	Let $f:\mathbb{R}\to \mathbb{R}$ and $g:[0,T]\times \mathbb{R}\to \mathbb{R}$ be two bounded continuous functions.
	Then the function 
	\begin{equation*}
		v(s,x) := E\Big[f(X^{s,x}_T) + \int_s^Tg_u(X^{s,x}_u)\diffns u \Big]
	\end{equation*}
	satisfies $v(s,x)=u(s, F_\nu(x))$, where $u$ is a viscosity solution of the partial differential equation
	\begin{equation}
	\label{eq:pde}
		\begin{cases}
			&\partial_tu + \frac{1}{2}((f_\nu\cdot \sigma)\circ F^{-1}_\nu)^2\partial^2_{xx}u + g\circ F^{-1}_\nu =0\\
			&u(T,x) = f\circ F^{-1}_\nu(x).
		\end{cases}
	\end{equation}
\end{proposition}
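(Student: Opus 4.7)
The plan is to reduce to a classical Feynman-Kac representation via the Zvonkin-type transformation already used in Lemma \ref{lem:indist}. Setting $Y^{s,y}_t := F_\nu(X^{s,x}_t)$ with $y := F_\nu(x)$, Tanaka's formula absorbs the local-time term and yields the driftless SDE
\begin{equation*}
Y^{s,y}_t = y + \int_s^t \tilde\sigma(Y^{s,y}_u)\, \diffns W_u, \qquad \tilde\sigma := (f_\nu \cdot \sigma)\circ F_\nu^{-1}.
\end{equation*}
Since $F_\nu$ is a bi-Lipschitz homeomorphism of $\mathbb{R}$ and $f, g$ are bounded continuous, the functions $\tilde f := f\circ F_\nu^{-1}$ and $\tilde g_r := g_r \circ F_\nu^{-1}$ are again bounded continuous. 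The change of variable $y = F_\nu(x)$ then yields $v(s,x) = u(s, F_\nu(x))$, where
\begin{equation*}
u(s,y) := E\!\left[\tilde f(Y^{s,y}_T) + \int_s^T \tilde g_r(Y^{s,y}_r)\, \diffns r\right].
\end{equation*}
It therefore suffices to verify that $u$ is a viscosity solution of $\partial_t u + \frac{1}{2}\tilde\sigma^2 \partial_{xx}^2 u + \tilde g = 0$ on $[0,T)\times \mathbb{R}$ with terminal datum $u(T,\cdot) = \tilde f$.

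For the transformed problem, Theorem \ref{thm:pathwise.unique} delivers pathwise uniqueness, while weak existence follows as in the proof of Proposition \ref{pro:exist_lt}; consequently $Y^{s,y}$ is a strong Markov process, and $u$ inherits the dynamic programming identity
\begin{equation*}
u(s,y) = E\!\left[u(s+h, Y^{s,y}_{s+h}) + \int_s^{s+h} \tilde g_r(Y^{s,y}_r)\, \diffns r\right].
\end{equation*}
Continuity of $u$ in $(s,y)$ follows from Proposition \ref{pro:cont} (continuity of the flow in the initial condition) together with standard moment estimates for the linearly growing SDE (continuity in time). The standard viscosity argument then proceeds: given a smooth test function $\varphi$ with $u - \varphi$ attaining a local maximum at $(s_0, y_0)$, apply It\^o's formula to $\varphi(t, Y^{s_0, y_0}_t)$ on $[s_0, s_0 + h]$, combine with the DPP, divide by $h$ and let $h \downarrow 0$; continuity of $\tilde\sigma$ and $\tilde g$, together with dominated convergence, yield the subsolution inequality, and the symmetric argument gives the supersolution inequality. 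The terminal condition $u(T, y) = \tilde f(y)$ is immediate from the definition of $u$.

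The main obstacle is that $\tilde\sigma$ is only continuous and, by assumption, may vanish on a set of positive measure; hence no classical parabolic regularity is available and one is forced to work at the level of viscosity solutions. What makes the argument go through is precisely the combination of pathwise uniqueness from Theorem \ref{thm:pathwise.unique} (granting the strong Markov property, hence the DPP) and the flow continuity from Proposition \ref{pro:cont} (granting continuity of $u$, hence the validity of the test-function argument). Composing with $F_\nu$ then returns the statement in the form announced, with coefficient $((f_\nu\cdot\sigma)\circ F_\nu^{-1})^2 = \tilde\sigma^2$.
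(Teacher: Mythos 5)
Your proposal follows essentially the same route as the paper: the Zvonkin-type transformation $Y^{s,y}=F_\nu(X^{s,x})$ reducing to the driftless SDE with coefficient $\tilde\sigma=(f_\nu\cdot\sigma)\circ F_\nu^{-1}$, the identity $v(s,x)=u(s,F_\nu(x))$, and the standard viscosity argument combining the Markov property (dynamic programming) with It\^o's formula applied to a test function, dividing by the time increment and passing to the limit. Your added remarks on why the strong Markov property holds (pathwise uniqueness plus weak existence) and on the continuity of $u$ are points the paper leaves implicit, but they do not change the method.
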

\begin{proof}
	Putting $y:= F_\nu(x)$ and $Y^{s,y}:= F_\nu(X^{s,x})$, we have  
	\begin{equation}
 	Y_t^{s,y} = y + \int_s^t\tilde{\sigma}(Y_u^{s,y})\diffns W_u
 	\end{equation}
 	and $v(s,x) = E[f(F^{-1}_\nu(Y^{s,y}_T)) + \int_s^Tg_u(F^{-1}_\nu(Y_u^{s,y}))\diffns u] = :u(s,y)$.
 	Thus, $v(s,x) = u(s, F_\nu(x))$.
 	It remains to show that $u$ is a viscosity solution of \eqref{eq:pde}.
	Let $(s,y)\in [0,T]\times \mathbb{R}$ and $\varphi \in C^{1,2}$ be a test function with bounded derivatives such that $\varphi - u$ attains a global maximum at $(s,y)$ with $\varphi(s,y) = u(s,y)$.	
	If $s = T$, we clearly have $\varphi(T,y) = f\circ F^{-1}_\nu(y)$.
 	Thus, for all $\varepsilon>0$, It\^o's formula yields
 	\begin{align*}
 		\varphi(s + \varepsilon, Y^{s,y}_{s + \varepsilon}) - \varphi(s, y) = \int_s^{s + \varepsilon}\Big\{\partial_t\varphi(t,Y^{s,y}_t) + \frac{1}{2}\tilde{\sigma}^2(Y^{s,y}_t)\partial_{xx}^2\varphi(s, Y^{s,y}_t)\Big\}\diffns t + \int_s^{s + \varepsilon}\partial_x\varphi(t, Y^{s,y}_t)\tilde{\sigma}(Y^{s,y}_t)\diffns W_t
 \end{align*}
 Thus, taking expectation on both sides, one has
 \begin{multline*}
 	E\Big[u(s+\varepsilon, Y^{s,y}_{s + \varepsilon})+\int_s^{s+\varepsilon}g_t( F^{-1}_\nu(Y^{s,y}_t))\diffns t\Big] - u(s, y)\\ \ge E\Big[\int_s^{s + \varepsilon}\Big\{\partial_t\varphi(t,Y^{s,y}_t) + \frac{1}{2}\tilde{\sigma}^2(Y^{s,y}_t)\partial_{xx}^2\varphi(t, Y^{s,y}_t) + g_t( F^{-1}_\nu(Y^{s,y}_t))\Big\}\diffns t \Big].
 \end{multline*}
 Since $Y^{s,y}$ is a Markov process, the left hand side above is $0$.
 Thus, multiplying both sides by $1/\varepsilon$ and taking the limit as $\varepsilon$ goes to $0$ gives
 \begin{equation*}
 	\partial_t\varphi(s,y) + \frac{1}{2}\tilde{\sigma}^2\partial^2_{xx}\varphi(s,y) + g_s( F^{-1}_\nu(y)) \ge 0.
 \end{equation*}
 That is, $u$ is a viscosity subsolution of \eqref{eq:pde}.
 A similar argument shows that $u$ is also a viscosity supersolution.
\end{proof}

\subsection{Applications to classical SDEs}

In this final section, we consider the (classical) SDE
\begin{equation}
\label{eq:class.SDE}
	X_t = x + \int_0^t\sigma(X_u)\diffns W_u + \int_0^tb(X_u)\diffns u
\end{equation}
where, $b,\sigma:\mathbb{R}\to \mathbb{R}$ are two measurable functions.
It is well known that the SDE \eqref{eq:class.SDE} is a particular case of the SDE \eqref{eq:SDE.homogene} involving the local time, when the measure $\nu$ is absolutely continuous w.r.t. the Lebesgue measure.
This observation allows us to deduce, from Section \ref{sec:uniqueness}, pathwise uniqueness, strong existence and regularity results for the classical SDE \eqref{eq:class.SDE} with measurable coefficients.

Let $N_b:=\{x: b(x) = 0\}$ and consider the following condition:
\begin{itemize}
	\item[(A2')]  $N_\sigma \subseteq N_b$ 
	and $\frac{b}{\sigma^2}1_{N^c_b} \in L^1(\mathbb{R})$. 
\end{itemize}

\begin{corollary}[Pathwise uniqueness and continuity]
\label{cor:classical.unique}
	In either of the following cases the SDE \eqref{eq:class.SDE} satisfies the pathwise uniqueness property:
	\begin{itemize}
		\item[(i)]The condition \textup{(}A2'\textup{)} is satisfied and the function $\sigma$ satisfies (LT),
		\item[(ii)] The conditions \textup{(}A2'\textup{)}, \textup{(}A3\textup{)} and \textup{(}A4\textup{)} are satisfied.
	\end{itemize}
	Moreover, if $\sigma$ is continuous and there are $A,B\ge 0$ such that $|\sigma(x)\le A(B +|x|)$, then under either of the above conditions, for all $\alpha \in [0,1/2)$ and $\varepsilon>0$ the solution $X^x$ of \eqref{eq:class.SDE} with initial condition $x$, satisfies
		\begin{equation}
		\lim_{x \to x_0}P\left(||X^x - X^{x_0}||_\alpha >\varepsilon\right) = 0 \quad\text{for all } x_0 \in \mathbb{R}.
	\end{equation}	
\end{corollary}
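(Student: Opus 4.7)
The plan is to recast the classical SDE \eqref{eq:class.SDE} as a particular instance of the local-time SDE \eqref{eq:SDE.homogene} and then invoke the results of Section \ref{sec:uniqueness} together with Proposition \ref{pro:cont}. Specifically, introduce the absolutely continuous signed measure
$$\nu(\diffns a) := \frac{b(a)}{\sigma^2(a)}\,1_{N_\sigma^c}(a)\,\diffns a,$$
which, thanks to (A2'), is a finite signed Radon measure on $\mathbb{R}$. By construction $\nu$ has no atoms, so (A1) holds, and $|\nu|(\mathbb{R}) = |\nu|(N_\sigma^c) < \infty$, so (A2) holds as well. The occupation time formula, combined with the inclusion $N_\sigma \subseteq N_b$ from (A2'), gives
$$\int_{\mathbb{R}}L^a_t(X)\,\nu(\diffns a) = \int_0^t \frac{b(X_u)}{\sigma^2(X_u)}\,1_{\{X_u\notin N_\sigma\}}\,\sigma^2(X_u)\,\diffns u = \int_0^t b(X_u)\,\diffns u,$$
so that a continuous semimartingale $X$ solves \eqref{eq:class.SDE} if and only if it solves \eqref{eq:SDE.homogene} with this $\nu$.

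With this identification, pathwise uniqueness follows by a direct appeal to the corresponding statements. In case (i), Proposition \ref{pro:lt} applies since $|\nu|(\{a\}) = 0 < 1$ for every $a \in N_\sigma$, $|\nu|(N_\sigma^c) < \infty$ by (A2'), and (LT) is part of the hypothesis. In case (ii), Theorem \ref{thm:pathwise.unique} applies since (A1)--(A2) hold by construction and (A3)--(A4) are assumed outright.

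For the continuity assertion, observe first that the linear growth bound $|\sigma(x)| \le A(B+|x|)$ can be rewritten as $|\sigma(x)| \le A'(1+|x|)$ with $A' := A\max(1,B)$, so it fits the framework of Proposition \ref{pro:cont}. Since $\nu$ is absolutely continuous, the product in the definition of $f_\nu$ is identically $1$ and $f_\nu(x) = \exp(-2\nu((-\infty,x]))$ is continuous, hence so is $\tilde{\sigma} = (f_\nu\cdot\sigma)\circ F_\nu^{-1}$, which moreover inherits linear growth as in the proof of Proposition \ref{pro:cont}. Pathwise uniqueness of \eqref{eq:SDE.homogene} has been established in both cases, so it transfers through the bijection $F_\nu$ to the transformed SDE $Y^x = F_\nu(X^x)$; one then reproduces the argument of Proposition \ref{pro:cont} verbatim, invoking \cite[Proposition 3.8]{Bah-Mer-Ouk98} for $Y^x$ and composing with the Lipschitz inverse $F_\nu^{-1}$ to transfer H\"older continuity back to $X^x$.

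The only delicate verification is the occupation time identification, which relies crucially on $N_\sigma \subseteq N_b$ to ensure both that the singularities of $b/\sigma^2$ are harmless on the trajectories of $X$ and that truncating by $1_{N_\sigma^c}$ does not alter the drift integral. The remainder is bookkeeping to check that each hypothesis transfers cleanly through the reduction.
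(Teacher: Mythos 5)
Your proposal is correct and follows essentially the same route as the paper: both rewrite the drift via the occupation time formula as $\int_{\mathbb{R}}L^a_t(X)\,\nu(\diffns a)$ with $\nu(\diffns a)=\frac{b}{\sigma^2}\mathbf{1}(a)\diffns a$ (your indicator $1_{N_\sigma^c}$ and the paper's $1_{N_b^c}$ define the same measure under (A2')), and then invoke Proposition \ref{pro:lt}, Theorem \ref{thm:pathwise.unique} and Proposition \ref{pro:cont}. If anything, you are slightly more careful than the paper in checking that (A1)--(A2) hold for this atomless finite measure and that the continuity argument still runs in case (i), where (A3)--(A4) are not assumed but pathwise uniqueness is available from Proposition \ref{pro:lt}.
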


\begin{proof}
	Consider the measure $\nu$ given by
	\begin{equation*}
		\nu(da):=\frac{b}{\sigma^2}1_{N^c_b}(a)\diffns a.
	\end{equation*}
	Since for every process $X$ satisfying \eqref{eq:class.SDE} it holds $d\langle X\rangle_t =\sigma^2(X_t)\,dt$, by the occupation time formula, one has
	\begin{equation*}
		\int_0^tb(X_u)\diffns u = \int_0^t\frac{b}{\sigma^2}\sigma^21_{N^c_b}(X_u)\diffns u = \int_\mathbb{R}\frac{b}{\sigma^2}1_{N^c_b}(a)L^a_t(X)\diffns a = \int_\mathbb{R}L^a_t(X)\,\nu(\diffns a).
	\end{equation*}
	Thus, the SDE \eqref{eq:class.SDE} can be rewritten as
	\begin{equation}
	\label{eq:class.SDE_local}
		X_t = x + \int_0^t\sigma(X_u)\diffns W_u + \int_\mathbb{R}L^a_t(X)\,\nu(\diffns a).
	\end{equation}
	Therefore, the proof of pathwise uniqueness is a direct application of Proposition \ref{pro:lt} and Theorem \ref{thm:pathwise.unique} after identifying the SDE \eqref{eq:class.SDE} and the SDE \eqref{eq:class.SDE_local}.
	Similarly, the proof of continuity follows from Proposition \ref{pro:cont}.
\end{proof}

\begin{corollary}[Strong existence] 
	Assume that the conditions \textup{(}A2'\textup{)}, \textup{(}A3\textup{)} and \textup{(}A4\textup{)} are satisfied. 
Assume in addition that $N^c_\sigma \subseteq I_\sigma$. Then the SDE \eqref{eq:class.SDE} admits a unique strong solution.
\end{corollary}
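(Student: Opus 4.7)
The plan is to invoke Proposition \ref{pro:exist_lt} after rewriting the classical SDE \eqref{eq:class.SDE} as a local-time SDE of the form \eqref{eq:SDE.homogene}, exactly as was done for pathwise uniqueness in Corollary \ref{cor:classical.unique}. Specifically, I would introduce the (absolutely continuous) signed measure
\[
  \nu(\diffns a) := \frac{b}{\sigma^2}\mathbf{1}_{N^c_b}(a)\,\diffns a.
\]
Since $\nu$ has no atoms, condition (A1) is automatic; and condition (A2) follows from the $L^1$ integrability postulated in (A2').

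Next, applying the occupation time formula together with the identity $\diffns\langle X\rangle_t = \sigma^2(X_t)\diffns t$ (valid for any semimartingale solution of either equation), I would reproduce the computation from the proof of Corollary \ref{cor:classical.unique} to rewrite the classical drift as $\int_0^t b(X_u)\diffns u = \int_\mathbb{R} L^a_t(X)\,\nu(\diffns a)$. This sets up a one-to-one correspondence between solutions of \eqref{eq:class.SDE} and solutions of \eqref{eq:SDE.homogene} with this particular $\nu$. Because the identification is pathwise and uses only the trajectories of $X$ and $W$, the correspondence preserves adaptedness to the Brownian filtration and hence the strong solution property.

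It then remains to verify the hypotheses of Proposition \ref{pro:exist_lt} for this local-time SDE: (A1) and (A2) have just been checked, (A3) and (A4) hold by assumption, and $N^c_\sigma \subseteq I_\sigma$ is explicitly assumed. Proposition \ref{pro:exist_lt} therefore furnishes a unique strong solution of \eqref{eq:SDE.homogene}, which via the identification is the desired unique strong solution of \eqref{eq:class.SDE}. The main (and essentially only) point to verify is that the measure built from $b/\sigma^2$ indeed fits into the local-time framework, which is exactly what (A2') guarantees; no further obstacle arises, as pathwise uniqueness for the classical SDE has already been established in Corollary \ref{cor:classical.unique}(ii) and weak existence is contained in Proposition \ref{pro:exist_lt}.
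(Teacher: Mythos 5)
Your proposal is correct and follows essentially the same route as the paper: identify the classical SDE with the local-time SDE via the measure $\nu(\diffns a)=\frac{b}{\sigma^2}1_{N_b^c}(a)\,\diffns a$ and the occupation time formula, check that (A1)--(A2) hold for this atomless finite measure, and then invoke Proposition \ref{pro:exist_lt}. The paper states this identification without spelling out the verification of (A1)--(A2) or the pathwise nature of the correspondence, so your write-up is simply a more detailed version of the same argument.
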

\begin{proof}
	The result follows from Proposition \ref{pro:exist_lt} after noticing that the SDE \eqref{eq:class.SDE} corresponds to \eqref{eq:SDE.homogene} with the measure $\nu(da):= \frac{b}{\sigma^2}1_{N_b^c}(a)\diffns a$.
\end{proof}


\vspace{.71cm}

\small{  \noindent \textbf{Olivier Menoukeu Pamen}: African Institute for Mathematical Sciences, Ghana,
University of Ghana, Ghana
and University of Liverpool Institute for Financial and Actuarial Mathematics, Department of Mathematical Sciences,
L69 7ZL, United Kingdom\\
\small{\textit{E-mail address:} menoukeu@liverpool.ac.uk\\
Financial support from the Alexander von
Humboldt Foundation, under the programme financed by the German Federal Ministry of Education and Research
entitled German Research Chair No 01DG15010 is gratefully acknowledged.}
  \vspace{.2cm}

  \noindent \textbf{Youssef Ouknine}: Complex Systems Engineering and Human Systems, Mohammed VI Polytechnic University, Lot 660, Hay Moulay Rachid, Ben Guerir, 43150, Morocco and and 
Mathematics Department, FSSM, Cadi Ayyad University, , Marrakesh, 40000, Morocco \\
  \small{\textit{E-mail address:}  youssef.ouknine@um6p.ma}
  \vspace{.2cm}

\noindent Ludovic Tangpi: Princeton University, Department of Operations Research and Financial Engineering. Princeton, 08544 NJ, USA.\\
{\small\textit{E-mail address:} ludovic.tangpi@princeton.edu}.

\end{document}